\newtheorem{theo}{Theorem}                     
\newtheorem{prop}{Proposition}                  
\newtheorem{coro}{Corollary} 
\newtheorem{lemm}{Lemma}
\theoremstyle{remark}                  
\newtheorem{rema}{\bf Remark}
\begin{document}

\title{The structure of MDC-Schottky extension groups}

\author{Rub\'en A. Hidalgo}
\address{Departamento de Matem\'atica y Estad\'{\i}stica, Universidad de La Frontera. Temuco, Chile}
\email{ruben.hidalgo@ufrontera.cl}
\thanks{Partially supported by projects Fondecyt 1230001 and 1220261}
\keywords{}
\subjclass[2010]{30F10, 30F40}

\begin{abstract} 
Let $M^{0}$ be a complete hyperbolic $3$-manifold whose conformal boundary is a closed Riemann surface $S$ of genus $g \geq 2$.  If $M=M^{0} \cup S$, 
then let ${\rm Aut}(S;M)$ be the group of conformal automorphisms of $S$ which extend to hyperbolic isometries of $M^{0}$. If the natural homomorphism at fundamental groups, induced by 
the natural inclusion of $S$ into $M$, is not injective, then it is known that $|{\rm Aut}(S;M)| \leq 12(g-1)$.
If $M$ is a handlebody, then it is also known that the upper bound is attained. 

In this paper, we consider the case when $M$ is homeomorphic to the connected sum of $g \geq 2$ copies of $D^{*} \times S^{1}$, where $D^{*}$ denotes the punctured closed unit disc and $S^{1}$ the unit circle. In this case, we obtain that: (i) if $g=2$, then $|{\rm Aut}(S;M)| \leq 12$ and the equality is attained, this happening for ${\rm Aut}(S;M)$ isomorphic to the dihedral group of order $12$, and (ii) if $g \geq 3$, then $|{\rm Aut}(S;M)|<12(g-1)$, in particular, the above upper bound is not attained.
\end{abstract}

\maketitle

\section{Introduction}
Let $M^{0}$ be a complete hyperbolic $3$-manifold, whose conformal boundary is a closed Riemann surface $S$ of genus $g \geq 2$. 
In this case, $M=M^{0} \cup S$ is a bordered $3$-manifold that does not need to be compact. In \cite{Schwarz}, Schwarz observed that the group ${\rm Aut}(S)$, of conformal automorphisms of $S$, is  finite and, in \cite{Hurwitz}, Hurwitz obtained that ${\rm Aut}(S) \leq 84(g-1)$.  Let us denote by ${\rm Aut}(S;M)$ the subgroup of ${\rm Aut}(S)$ consisting of those conformal automorphisms that extend as hyperbolic isometries of $M^{0}$. 

The embedding $\iota:S \hookrightarrow M$ induces a homomorphism $\iota_{*}:\pi_{1}(S,*) \to \pi_{1}(M,*)$. If the kernel $\ker(\iota_{*})$ is trivial, then the order of ${\rm Aut}(S;M)$ can be as big as possible. For instance, in \cite{GZ}, Gradolato and Zimmermann provided examples, with $M$ compact, such that 
$|{\rm Aut}(S;M)| = 84(g-1)$ (in these examples, ${\rm Aut}(S;M)={\rm Aut}(S) \cong {\rm PSL}_{2}(q)$, where $q \leq 1000$ and $q \notin \{7,27\}$).
If $\ker(\iota_{*})$ is not trivial, then the elements of ${\rm Aut}(S;M)$ will keep invariant a non-empty collection of pairwise disjoint simple closed geodesics on $S$. In that case, the quotient orbifold $S/{\rm Aut}(S;M)$ cannot be of triangular signature, so (by the Riemann-Hurwitz formula) $|{\rm Aut}(S;M)| \leq 12(g-1)$. We may wonder if, in this situation, the upper bound $12(g-1)$ is attained for infinitely many values of $g$.

Let $\Gamma$ be a Kleinian group such that ${\mathbb H}^{3}/\Gamma=M^{0}$ and $\Omega/\Gamma=S$, where $\Omega \subset \widehat{\mathbb C}$ is the (non-empty) region of discontinuity of $\Gamma$. In terms of $\Gamma$, we may identify every subgroup of ${\rm Aut}(M;S)$ with the quaotient group $K/\Gamma$, where $K$ is a suitable Keinian group containing $\Gamma$ as a normal subgroup of finite index. 
So, we are interested in: (i) to find upper bounds for the index of $\Gamma$ in those possible groups $K$, (ii) to see if there is one of such Kleinian groups $K$ containing $\Gamma$ as a maximal finite index normal subgroup, and (iii) to provide a description (in terms of the Klein-Maskit combination theorems \cite{Maskit:book, Maskit:CombIV}) of these Kleinian groups $K$.

If, for example, $M$ is a handlebody of genus $g \geq 2$, then $\Gamma$ is a Schottky group of rank $g$. In \cite{Zimmermann3}, Zimmermann proved that the upper bound  $12(g-1)$ is attained
for infinitely many values of $g$  (see \cite{Hid:Max} for a Schottky group description of these maximal order cases). A description, in terms of the Klein-Maskit combination theorems, of those Kleinian groups that are finite index normal extensions of Schottky groups, was provided in \cite{Hidalgo:virtualSchottky}.

At this point, we may wonder if the upper bound $12(g-1)$ can be realized by infinitely many values of $g \geq 2$ for the other cases. 
In this paper, we proceed to provide examples of $M$ for which the upper bound $12(g-1)$ is not attained for $g \geq 3$. Before to state our main theorem, we need some definitions.

We will say that $M$ is an MCD-handlebody of genus $g$ if it is homeomorphic to the connected sum of $g$ copies of $D^{*} \times S^{1}$, where $D^{*}$ denotes the punctured closed unit disc and $S^{1}$ is the unit circle. In this case, the Kleinian group $\Gamma$ is a free product, in the sense of the Klein-Maskit combination theorems \cite{Maskit:book, Maskit:CombIV}, of $g$ double parabolic groups. We name each of these type of groups as an MDC-Schottky group (some authors also called them Schottky-type groups of type $(0,0,g)$).

A Kleinian group that contains an MDC-Schottky as a finite index normal subgroup is called an MDC-Schottky extension group. In Theorem \ref{estructura}, we provide a description, in terms of the Klein-Maskit combination theorems, of MDC-Schottky extension groups. As a consequence of such a description, in Theorem \ref{cor1}, we provide an explicit description of the group ${\rm Aut}(S;M)$, for the case of a MCD-handlebody.  In particular,  we obtain the following (see Corollary \ref{maxteo1}).

\begin{theo}\label{mainteo1}
Let $M$ be an MCD-handledody, with a complete hyperbolic structure on its interior and whose conformal boundary is a closed Riemann surface $S$ of genus $g \geq 2$.
\begin{enumerate}[leftmargin=15pt]
\item If $g=2$, then $|{\rm Aut}(S;M)| \leq 12$. Equality (only) holds for ${\rm Aut}(S;M)$ isomorphic to the dihedral group of order $12$.  
\item If $g \geq 3$, then $|{\rm Aut}(S;M)|<12(g-1)$.
\end{enumerate}
\end{theo}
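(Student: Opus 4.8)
The plan is to derive Theorem~\ref{mainteo1} from the explicit structural description of $\mathrm{Aut}(S;M)$ in Theorem~\ref{cor1} (which rests on the Klein--Maskit description of MDC-Schottky extension groups in Theorem~\ref{estructura}), together with the Riemann--Hurwitz formula. Write $G=\mathrm{Aut}(S;M)$ and let $\Gamma\cong(\mathbb{Z}^2)^{*g}$ be an MDC-Schottky group with $\mathbb{H}^3/\Gamma=M^0$, $\Omega/\Gamma=S$, so $G=K/\Gamma$ for a Kleinian group $K\supseteq\Gamma$ in which $\Gamma$ is normal of finite index. First I would record why the general bound $|G|\le 12(g-1)$ is in force here: since the free product $(\mathbb{Z}^2)^{*g}$ is non-trivial, the Kurosh subgroup theorem shows that every freely indecomposable subgroup of $\Gamma$ is trivial, infinite cyclic, or conjugate into a $\mathbb{Z}^2$ factor, so $\Gamma$ contains no closed surface group of genus $\ge 1$ and hence $\iota_*$ is not injective. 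Moreover $G$ acts on $S$ by orientation-preserving maps (an orientation-reversing isometry of $M^0$ restricts to an orientation-reversing map of $S=\partial M$, hence is not conformal), so $\mathcal{O}=S/G$ is an orientable Riemann orbifold and, as recalled in the introduction, is not a triangle orbifold. Then Riemann--Hurwitz gives $(2g-2)/|G|=-\chi^{\mathrm{orb}}(\mathcal{O})\ge \tfrac16$, with equality \emph{only} when $\mathcal{O}$ has signature $(0;2,2,2,3)$. So the whole problem reduces to understanding when this borderline signature can occur.

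Next I would bring in the structure. By Theorem~\ref{cor1}, $G$ permutes the $g$ rank-$2$ cusps of $M^0$; the stabilizer $G_c$ of a cusp acts on the corresponding flat torus by an orientation-preserving Euclidean group, so $T^2/G_c$ is one of $T^2$, $S^2(2,2,2,2)$, $S^2(2,4,4)$, $S^2(3,3,3)$, $S^2(2,3,6)$; and the quotient $3$-orbifold $M^0/G$ is obtained, via the combination theorems, from these ``doubly-parabolic blocks'' and finitely many spherical blocks, so that $\mathcal{O}=S/G$ is correspondingly assembled from the flat-orbifold boundaries of the cusp blocks glued along circles. This supplies two bookkeeping identities: a cusp count $g=\sum_{\text{orbits}}|G|/|G_c|$, and --- reading $\chi(K)=(1-g)/|G|$ off the graph-of-groups decomposition of $K$, in which the doubly-parabolic vertex groups are finite extensions of $\mathbb{Z}^2$ and hence have rational Euler characteristic $0$ --- the relation $g-1=|G|\big(\sum_{\text{edges}}1/|K_e|-\sum_{\text{spherical vertices}}1/|K_v|\big)$.

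For part (2) I would argue by contradiction. Suppose $g\ge 3$ and $|G|=12(g-1)$, so $\mathcal{O}$ has signature $(0;2,2,2,3)$; in particular $\mathcal{O}$ has underlying genus $0$, which forces the combination graph to be a tree (a loop would create positive genus) and forces every $G_c$ to have a non-trivial rotation part (otherwise a torus boundary would reappear). I would then focus on the order-$3$ element $\tau\in G$ fixing the order-$3$ cone point of $\mathcal{O}$: applying Riemann--Hurwitz to $\langle\tau\rangle$ acting on $S$ bounds the number of $\tau$-fixed points on $S$, and since each $\tau$-fixed cusp contributes three such points (the three fixed points of an order-$3$ rotation of the torus, minus at most one absorbed by a gluing disc) while $g\equiv\#\{\tau\text{-fixed cusps}\}\pmod 3$, this forces $\tau$ to fix very few cusps and hence to permute almost all of the $g$ cusps in $3$-cycles --- which is incompatible with the rigidity of automorphisms of a tree that has this many cusp-blocks once $g\ge 3$. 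Sorting through the finitely many surviving tree-configurations and eliminating each of them --- equivalently, showing that the genus-zero, signature-$(0;2,2,2,3)$ configuration that realizes the handlebody maximum $12(g-1)$ in \cite{Zimmermann3, Hid:Max} has no analogue once the ``handle'' vertex groups are replaced by Euler-characteristic-zero doubly-parabolic groups --- is the step I expect to be the main obstacle.

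For part (1) the inequality $|G|\le 12$ is the $g=2$ instance of the general bound, so only the equality statement needs work. If $|G|=12$ then $\mathcal{O}$ has signature $(0;2,2,2,3)$, so $G$ is a group of order $12$ carrying a generating quadruple $(x_1,x_2,x_3,x_4)$ with $x_1x_2x_3x_4=1$, with the orders of $x_1,x_2,x_3,x_4$ equal to $2,2,2,3$, and with torsion-free kernel. Running through the five isomorphism types of order $12$: in $C_{12}$, $C_2\times C_6$ and $\mathrm{Dic}_3$ a product of three involutions is again an involution, never of order $3$; in $A_4$ all involutions lie in the normal Klein four-subgroup, so again their products cannot have order $3$; only $D_6=\langle r,s\mid r^6=s^2=1,\ srs=r^{-1}\rangle$ admits such a quadruple (for instance $(s,\,r^5s,\,r^3,\,r^2)$, which generates $D_6$ and has torsion-free kernel). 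Finally, to see that the value $12$ is attained, I would use the converse direction of Theorem~\ref{estructura} to build explicitly a Kleinian group $K$ with $K/\Gamma\cong D_6$, where $\Gamma=P_1*P_2$ is a free Klein--Maskit product of two doubly-parabolic groups realizing this $D_6$-action with quotient signature $(0;2,2,2,3)$ --- the order-$3$ element fixing both (hexagonal) cusp tori, with one fixed point per torus absorbed into the connecting disc; since $|G|\le 12$ excludes anything larger, $\mathrm{Aut}(S;M)=D_6$ for this $M$, and $D_6$ is the only group for which equality holds. This completes the plan.
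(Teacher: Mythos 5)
Your reduction to the signature $(0;2,2,2,3)$ is sound, and your treatment of part (1) is a legitimate alternative to the paper's: where the paper simply reads the answer off the classification in Theorem \ref{cor1} (with Remark \ref{ejemplo1} supplying the realizing example), you run a generating-vector argument over the five groups of order $12$, correctly observing that in $C_{12}$, $C_2\times C_6$, ${\rm Dic}_3$ and ${\mathcal A}_4$ a product of three involutions can never have order $3$, so only $D_6$ survives; the quadruple $(s,r^5s,r^3,r^2)$ checks out. That part works, modulo the realizability step which you defer to ``the converse direction of Theorem \ref{estructura}'' and which the paper carries out concretely.

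The genuine gap is in part (2), and you flag it yourself: after setting up the cusp count and the rational Euler characteristic identity $(g-1)/|G|=\sum_e 1/|K_e|-\sum_{v\ {\rm finite}}1/|K_v|$, the actual elimination of the $(0;2,2,2,3)$ configurations for $g\ge 3$ is left as ``the step I expect to be the main obstacle.'' The fixed-point count for the order-$3$ element does not obviously close this: an order-$3$ automorphism of a tree can perfectly well permute arbitrarily many vertices in $3$-cycles (a star with $3k$ leaves does this), so ``incompatible with the rigidity of automorphisms of a tree'' is not an argument. The paper closes the gap differently, and you should compare: Theorem \ref{loops} produces an $H$-invariant system ${\mathcal F}$ of dividing loops cutting $S$ into genus-$\le 1$ pieces, with either an $H$-invariant piece of genus $0$ or $1$, or an $H$-invariant loop. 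An invariant genus-$1$ piece forces $H$ to act on a torus preserving $\le g-1$ points, impossible at order $12(g-1)$; an invariant loop caps $|H|$ at $12$; and an invariant planar piece forces $H$ to be a finite M\"obius group, whence for $|H|=12(g-1)$ it is cyclic, dihedral, or one of ${\mathcal A}_4,{\mathcal A}_5,{\mathfrak S}_4$ (the latter only for $g\in\{3,6\}$), and then the decisive count is on the $r\le g$ boundary loops of that piece: each has cyclic stabilizer of order at most $6$, so every nontrivial $H$-orbit of loops has size at least $12(g-1)/6=2(g-1)>g\ge r$ for $g\ge 3$ — a contradiction. Your plan contains no substitute for this orbit-versus-genus count, so as written part (2) is not proved.
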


\section{Preliminaries}\label{Sec:prelim}
We will denote by the symbol $G<K$ (respectively, $G \lhd K$) to indicate that $G$ is a subgroup (respectively, a normal subgroup) of the group $K$.

\subsection{Kleinian and function groups}
In this section, we recall some of the generalities on Kleinian groups we will need in this paper (see, for instance, in the books \cite{Maskit:book,MT}). 
A {\it Kleinian group} is a discrete subgroup of the group  ${\mathbb M} \cong {\rm PSL}_{2}({\mathbb C})$  of M\"obius transformations. Each Kleinian group $K$ has associated its {\it region of discontinuity} $\Omega \subset \widehat{\mathbb C}$ (an open set, which might or not be empty, consisting of those points on which $K$ acts properly discontinuous) and its {\it limit set}
$\Lambda=\widehat{\mathbb C} \setminus \Omega$. Let us observe that if $G \leq K$ and $G$ have finite index in $K$, then both have the same region of discontinuity. 

\begin{rema}
The Riemann sphere $\widehat{\mathbb C}$ can be seen as the conformal boundary of the $3$-dimensional hyperbolic space ${\mathbb H}^{3}={\mathbb C} \times (0,+\infty)$ (with the complete Riemannian metric $ds^{2}=(|dz|^{2}+dt^{2})/t^{2}$); its group of orientation-preserving isometries can be identified with ${\rm PSL}_{2}({\mathbb C})$. A Kleinian group $K$ is, in this way, a discrete group of isometries of the hyperbolic $3$-space. The quotient $({\mathbb H}^{3} \cup \Omega)/K$ is a hyperbolic $3$-orbifold (a manifold if $K$ has no torsion), whose interior ${\mathbb H}^{3}/K$ has a complete hyperbolic metric and whose conformal boundary is $\Omega/K$. A Kleinian group is {\it geometrically finite} if it has a finite-sided polyhedron as a fundamental domain (in particular, it is a finitely generated group).
\end{rema}

A {\it function group} is a finitely generated Kleinian group with an invariant component in its region of discontinuity. Basic examples of function groups are the following ones.
 \begin{enumerate}[leftmargin=15pt] 
\item {\it Elementary groups}:  Kleinian groups with a finite limit set (so, the limit set of cardinality at most $2$).

\item {\it Quasifuchsian groups}: finitely generated Kleinian groups whose region of discontinuity has two invariant components. These groups are known to be quasiconformally conjugate to Fuchsian groups (of the first kind). 

\item {\it Totally degenerate groups}: finitely generated non-elementary Kleinian groups whose region of discontinuity is both connected and simply connected. 

\end{enumerate}

\begin{rema}
In \cite{Maskit:function1, Maskit:function}, Maskit proved that every function group can be constructed from elementary groups, quasifuchsian groups, and degenerate groups by a finite number of applications of the Klein-Maskit combination theorem \cite{Maskit:book, Maskit:CombIV}. If, in the construction, there are not totally degenerated groups, then the constructed group is geometrically finite \cite{Maskit:book}.
\end{rema}

\subsection{MDC-Schottky groups}
We define the trivial group as the {\it MDC-Schottky group of rank zero}.
An {\it MDC-Schottky group of rank one} is a double-cusped parabolic group, that is, a Kleinian group conjugated, by a suitable M\"obius transformation, to $G=\langle A(z)=z+1, B(z)=z+\tau\rangle \cong {\mathbb Z}^{2}$, for some complex number $\tau$ with positive imaginary part.  In this case, ${\mathbb C}/G$ is an MCD-handlebody of genus one.
An {\it MDC-Schottky group of rank $g \geq 2$}  is a Kleinian group $G$ obtained as the free product (using first Klein-Maskit's combination theorem \cite{Maskit:book, Maskit:CombIV}) of $g$ MDC-Schottky groups of rank one. It follows from the Klein-Maskit combination theorem that: (i) $G$ is a geometrically finite function group, (ii) its limit set is totally disconnected, (iii)
$M_{G}=({\mathbb H}^{3} \cup \Omega)/G$ is an MDC-handlebody of genus $g$ whose conformal boundary is the closed Riemann surface $S_{G}=\Omega/G$ of genus $g$, and (iv) 
every non-loxodromic transformation in $G$ is a parabolic transformation that belongs to a $G$-conjugate of one of the double-cusped parabolic subgroups.

In the above definition, MDC stands for ``maximal double-cusped".


\subsection{MDC-Schottky extension groups}\label{Sec:g=0,1}
A Kleinian group $K$ is called an {\it MDC-Schottky extension group} of rank $g$ if it contains an MDC-Schottky group $G$ of rank $g$ as a finite index normal subgroup. In particular, $K$ is a function group (with the same region of discontinuity as $G$), with a totally disconnected limit set, and such that each of its non-loxodromic transformations is either a finite order elliptic transformation or it is $K$-conjugated to a parabolic transformation belonging to a double-cusped parabolic factor subgroup of $G$. As a consequence of Maskit's decomposition theorem \cite{Maskit:function1, Maskit:function}, $K$ can be constructed (by use of the Klein-Maskit theorem) using finite groups of M\"obius transformations, double-cusped parabolic groups and cyclic groups generated by loxodromic transformations.
In Theorem \ref{estructura} (in Section \ref{Sec:main}), we provide a concrete decomposition structure of the MDC-Schottky extension groups in terms of the Klein-Maskit combination theorems.  Below, we describe those MDC-Schottky extension groups of ranks $0$ and $1$.

\subsubsection{\bf MDC-Schottky extension groups of rank zero}
MDC-Schottky extension groups of rank zero are the finite M\"obius groups, that is,  either (i) the trivial group, (ii) a cyclic group generated by a finite order elliptic transformation, (iii) a dihedral group $D_{n}$, (iv) the alternating group ${\mathcal A}_{4}$, (v) the alternating group ${\mathcal A}_{5}$ or (vi) the symmetric group ${\mathfrak S}_{4}$. 

\subsubsection{\bf MDC-Schottky extension groups of rank one}
An MDC-Schottky group $G$ of rank one is conjugated, by a suitable M\"obius transformation, to the double-parabolic Kleinian group $\langle A_{1}(z)=z+1, A_{2}(z)=z+ \tau \rangle \cong {\mathbb Z}^{2}$,   where $\tau$ is a complex number with positive imaginary part. In particular, $S={\mathbb C}/G$ is a genus one Riemann surface (i.e., a torus). If $K$ contains
$G$ as a finite index normal subgroup, then $K/G$ is a group of conformal automorphisms of $S$. These groups are described in the classical book of Burnside \cite{Burnside} (see also \cite{GT,HKK,Maskit:book}) and they are given, up to conjugation by a M\"obius transformation, by the following ones:
$$
\begin{array}{l}
K_{1}=\langle A(z)=z+1+\tau, \; B(z)=z+1 \rangle, \\ 
K_{2}=\langle A(z)=-z+1+\tau,B(z)=-z+1, \; C(z)=z+1 \rangle,\\
K_{3}=\langle A(z)=e^{2\pi i/3}z, \; B(z)=z+1\rangle,  \; (\tau=e^{2 \pi i/3})\\ 
K_{4}=\langle A(z)=iz, \; B(z)=-z+1\rangle, \; (\tau=i)\\
K_{6}=\langle A(z)=e^{\pi i/3}z, \; B(z)=-z+1\rangle, \; (\tau=e^{\pi i/3}) \\
K_{22}=\langle A(z)=-z+1+\tau, \; B(z)=-z+1, \; C(z)=-z+\tau/2, \; D(z)=-z+\tau/2+2 \rangle. 
\end{array}
$$

Note that :
\begin{enumerate}[leftmargin=15pt]
\item Both $K_{1}$ and $K_{3}$ are normalized by $j(z)=-z$. 

\item $K_{6}=\langle K_{3}, j\rangle$ and $K_{2}=\langle K_{1},j\rangle$. 

\item  $K_{2}$ is an index two subgroup of $K_{22}$.

\item The group $K_{6}$ uniformizes an orbifold with signature $(0,3;2,3,6)$, the group $K_{3}$ uniformizes an orbifold with signature  $(0,3;3,3,3)$, the group $K_{4}$ uniformizes an orbifold with signature $(0,3;2,4,4)$, the groups $K_{2}$ and $K_{22}$ both uniformize an orbifold with signature $(0,4;2,2,2,2)$ and $K_{1}$ uniformizes a torus.
\end{enumerate}

In the rest of this paper, we use the letter $K_{j}$ to denote any group obtained by a ${\rm PSL}_{2}({\mathbb C})$-conjugate of the above-normalized group $K_{j}$.


\subsection{MDC-Schottky groups and uniformizations}
An {\it uniformization} of a closed Riemann surface $S$ is a triple $(\Delta,G,P)$, where $G$ is a function group, $\Delta$ is a $G$-invariant connected component of its region of discontinuity and $P:\Delta \to S$ is a Galois planar covering with $G$ as its deck group.  The collection of uniformizations of $S$ is partially ordered: $(\Delta_{1},G_{1},P_{1}) \geq (\Delta_{2},G_{2},P_{2})$ if there is a (holomorphic) covering map $Q:\Delta_{1} \to \Delta_{2}$ such that $P_{1}=P_{2} \circ Q$. If $Q$ is an isomorphism, then these uniformizations are called {\it equivalent}.
Highest uniformizations are provided when $\Delta$ is simply connected (for instance, if $g \geq 2$, those are provided when $G$ is a co-compact Fuchsian group; called {\it Fuchsian uniformizations}). The lowest ones are those when $G$ is a {\it Schottky group} of rank $g$; called {\it Schottky uniformizations}.
 If $g \geq 1$, then among those uniformizations $(\Delta,G,P)$,  for which  $G$ is geometrically finite and $\Delta$ is all of its region of discontinuity, the highest ones occur when $G$ is an MDC-Schottky group of rank $g$; called  {\it MDC-Schottky uniformizations}.

Planarity theorem \cite{Maskit:book} states that for a Galois planar covering $P:\Delta \to S$, of a closed Riemann surface $S$, there is a finite collection of pairwise disjoint simple loops on $S$ so that the covering $P$ is the highest Galois planar covering of $S$ for which these loops (with the correct powers) lift to loops. The next result describes such a collection of loops providing the Galois planar coverings of $S$ with deck group an MDC-Schottky group.

\begin{lemm}\label{lema1}
Let $S$ be a closed Riemann surface of genus $g \geq 1$.
Any collection of pairwise disjoint loops in $S$, defining a Galois planar covering of  $S$ with deck group an MDC-Schottky group, consists of dividing simple loops so that all of them divide $S$ into surfaces of genus at most $1$.
\end{lemm}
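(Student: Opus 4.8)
The plan is to recast the planarity theorem as an algebraic statement about the kernel $N$ of the surjection $\pi_{1}(S)\twoheadrightarrow G$ attached to the given MDC-Schottky uniformization $(\Delta,G,P)$ (so $N=P_{*}(\pi_{1}(\Delta))$ and $G\cong\pi_{1}(S)/N$), and then to exploit that, by hypothesis, $G\cong G_{1}\ast\cdots\ast G_{g}$ with each $G_{i}\cong{\mathbb Z}^{2}$. First I would extract two facts from the planarity theorem. Since $G$ is torsion-free, the ``correct powers'' attached to the loops $c_{1},\dots,c_{n}$ are all equal to $1$ (if $c_{i}^{m}$ lifts to a loop with $m$ minimal, the image of $c_{i}$ in $G$ has order $m$, forcing $m=1$); hence each $c_{i}$ lifts to a loop, i.e. $[c_{i}]\in N$, and, since $P$ is the \emph{highest} planar Galois covering with this property, $N$ is exactly the normal closure of $\{[c_{1}],\dots,[c_{n}]\}$ in $\pi_{1}(S)$. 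Second, any $c_{i}$ bounding a disc is null-homotopic and may be deleted without changing $N$; re-inserting it at the end only cuts a disc off one of the complementary pieces, which does not spoil the genus bound. So I may assume each $c_{i}$ is essential.

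Next I would prove that each $c_{i}$ is dividing, by a homology count. Abelianizing $\pi_{1}(S)\twoheadrightarrow G$ gives a surjection $H_{1}(S;{\mathbb Z})\to H_{1}(G;{\mathbb Z})$. Here $H_{1}(S;{\mathbb Z})\cong{\mathbb Z}^{2g}$, while $H_{1}(G;{\mathbb Z})=G^{\mathrm{ab}}=\bigoplus_{i=1}^{g}G_{i}^{\mathrm{ab}}\cong{\mathbb Z}^{2g}$; a surjection between free abelian groups of the same finite rank is an isomorphism, so $N$ lies in the commutator subgroup of $\pi_{1}(S)$. In particular every $c_{i}$ is null-homologous in $S$, and a null-homologous simple closed curve on a closed orientable surface separates it. Hence each $c_{i}$ is a dividing loop.

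It remains to bound the genera. Cutting $S$ along $\mathcal{C}=\{c_{1},\dots,c_{n}\}$ produces connected pieces $S_{1},\dots,S_{k}$, of genera $g_{1},\dots,g_{k}$. Because the $c_{i}$ are pairwise disjoint and separating, the dual graph (vertices the $S_{j}$, edges the $c_{i}$) is a tree, so $\pi_{1}(S)$ is the fundamental group of the corresponding graph of groups, with vertex groups $\pi_{1}(S_{j})$ and infinite cyclic edge groups generated by the $c_{i}$, each included as a boundary loop. Passing to the quotient by $N$ kills every edge group, and for a graph of groups over a tree this turns the iterated amalgam into the free product of the vertex groups modulo their incident edge subgroups; since killing all boundary loops of $S_{j}$ amounts to capping its boundary circles by discs, the $j$-th such quotient is $\pi_{1}(\Sigma_{g_{j}})$, where $\Sigma_{h}$ denotes the closed orientable surface of genus $h$. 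Therefore
\[
G \;\cong\; \pi_{1}(\Sigma_{g_{1}}) \ast \cdots \ast \pi_{1}(\Sigma_{g_{k}}).
\]
Comparing with $G\cong G_{1}\ast\cdots\ast G_{g}$ and invoking the Kurosh subgroup theorem (equivalently, uniqueness of the Grushko decomposition): every freely indecomposable subgroup of $G$ is infinite cyclic or embeds in a conjugate of some $G_{i}\cong{\mathbb Z}^{2}$, hence is trivial, ${\mathbb Z}$, or ${\mathbb Z}^{2}$. But $\pi_{1}(\Sigma_{g_{j}})$ with $g_{j}\geq 2$ is one-ended, so freely indecomposable, and is neither cyclic nor abelian. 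Hence $g_{j}\leq1$ for every $j$, which is the claim.

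The step I expect to be the main obstacle is the last one: one has to be careful that cutting along the $c_{i}$ genuinely exhibits $\pi_{1}(S)$ as a graph of groups over a tree with cyclic edge groups — which is exactly why the separation statement must be established first — and that passing to the quotient by $N$ converts the iterated amalgam cleanly into a free product, so that the Kurosh/Grushko uniqueness applies to force each complementary piece to have genus at most one.
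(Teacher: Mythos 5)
Your proof is correct, but it follows a genuinely different route from the paper's. The paper's argument is geometric and very short: torsion-freeness of $G$ forces all the ``correct powers'' in the planarity theorem to equal $1$, and a complementary component of genus at least $2$ would force the covering group to contain a quasifuchsian group of the first kind, contradicting the fact that an MDC-Schottky group has totally disconnected limit set. You instead work purely with the algebra of $\pi_{1}(S)\twoheadrightarrow G$ and $N=P_{*}(\pi_{1}(\Delta))$: the abelianized surjection ${\mathbb Z}^{2g}\to{\mathbb Z}^{2g}$ is an isomorphism, so $N$ lies in the commutator subgroup and every defining loop is null-homologous, hence separating; then the tree-of-groups decomposition obtained by cutting along the loops collapses, modulo $N$, to $\pi_{1}(\Sigma_{g_{1}})\ast\cdots\ast\pi_{1}(\Sigma_{g_{k}})$, and Kurosh/Grushko uniqueness rules out any freely indecomposable non-abelian factor, i.e.\ any $g_{j}\geq 2$. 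Your version buys two things: it supplies an explicit argument for the ``dividing'' claim, which the paper's proof leaves implicit (and which does require justification, since a non-separating curve also yields low-genus complementary pieces but gives a quotient $\cong{\mathbb Z}^{2}\ast{\mathbb Z}$ rather than a free product of copies of ${\mathbb Z}^{2}$), and it needs nothing about limit sets --- only that the deck group is abstractly a free product of $g$ copies of ${\mathbb Z}^{2}$. The paper's version is shorter and reuses the Kleinian-group machinery already set up. The only point worth making explicit in your write-up is that the rank of the MDC-Schottky deck group equals $g$ (so that $G^{\mathrm{ab}}\cong{\mathbb Z}^{2g}$); this is automatic because the limit set is a Cantor set, so $\Delta$ is the full connected region of discontinuity and $\Delta/G$ is the closed genus-$g$ surface $S$, but your homology count quietly relies on it.
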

\begin{proof}
As an MDC-Schottky group is torsion-free, any loop in the family is a simple loop (that is, not the non-trivial positive power of a simple loop). If some of the connected components of the complement of such a set of defining loops have genus at least $2$, then the covering group should contain a quasifuchsian group of the first kind; a contradiction to the fact that a MDC-Schottky group has a totally disconnected limit set.
\end{proof}

\subsection{Lifting automorphisms to MDC-Schottky uniformizations}
Let $(\Omega,G,P)$ be an MDC-Schottky uniformization of the closed Riemann surface $S$. 
We say that a group $H$, of conformal automorphisms of $S$,  {\it lifts with respect} to this uniformization if, for every $h \in H$, there is a M\"obius transformation $\widehat{h}$ normalizing $G$ so that $h P=P \widehat{h}$. Note that, the group $K$ of these all lifted automorphisms is an MCD-Schottky extension group.
Below is a necessary and sufficient condition for $H$ to lift to an MDC-Schottky uniformization. 

\begin{theo}\label{loops}
Let $(\Omega,G,P)$ be a MDC-Schottky uniformization of $S$ and 
let $H$ be a (necessarily finite) group of conformal automorphisms of $S$. Then $H$ lifts if and only if there is a system of pairwise disjoint simple loops ${\mathcal F} \subset S$ so that:
\begin{itemize}[leftmargin=15pt]
\item[(i)] ${\mathcal F}$ is $H$-invariant;
\item[(ii)] each loop in ${\mathcal F}$ is a homotopically non-trivial dividing simple loop;
\item[(iii)] if $g=2$, then ${\mathcal F}$ consists of exactly one dividing loop, that divides $S$ into two tori;
\item[(iv)] if $g \geq 3$, then either
\begin{itemize}
\item[(iv.1)] $S \setminus {\mathcal F}$ is a collection of surfaces of genus $1$ and an extra one of genus $\gamma \in \{0,1\}$ which is $H$-invariant, or
\item[(iv.2)] ${\mathcal F}$ has an $H$-invariant loop and $S \setminus {\mathcal F}$ is a collection of surfaces of genus $1$;
\end{itemize}
and
\item[(v)] ${\mathcal F}$ defines the Galois  holomorphic planar covering $P:\Omega \to S$ with covering group $G$.
\end{itemize}
\end{theo}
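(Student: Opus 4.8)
The plan is to prove the two implications separately. Throughout I will use that the covering $P$ corresponds to the normal subgroup $N:=P_{*}\pi_{1}(\Omega)\lhd\pi_{1}(S)$, with $\pi_{1}(S)/N\cong G$, and that by the planarity theorem $N$ is the normal closure of the classes of the loops of a defining system (all occurring with power $1$, since an MDC-Schottky group is torsion-free).

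\emph{Sufficiency.} This is the routine direction. Given ${\mathcal F}$ as in (i)--(v), I would first use (v) together with the planarity theorem to identify $N$ with the normal closure in $\pi_{1}(S)$ of the classes of the loops of ${\mathcal F}$. Since ${\mathcal F}$ is $H$-invariant by (i), each $h\in H$ permutes these loops, so $h_{*}$ preserves their (conjugation- and inversion-closed) set of classes and hence $h_{*}(N)=N$; the lifting criterion for the Galois covering $P$ then produces, for every $h$, a lift $\widehat{h}\colon\Omega\to\Omega$ with $P\widehat{h}=hP$. Such a $\widehat{h}$ is automatically biholomorphic (as $P$ is a holomorphic local biholomorphism and $h$ is holomorphic) and automatically normalizes $G$ (from $P\widehat{h}g=P\widehat{h}$ for all $g\in G$, giving $\widehat{h}g\widehat{h}^{-1}\in G$). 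The only substantial point is that $\widehat{h}$ may be taken to be the restriction of a M\"obius transformation: here I would invoke the rigidity of non-elementary geometrically finite Kleinian groups, namely that a conformal automorphism of the region of discontinuity $\Omega$ normalizing $G$ extends to a M\"obius transformation (equivalently, a conformal automorphism of the conformal boundary of the geometrically finite manifold $\mathbb{H}^{3}/G$ extends to an isometry of the interior); see \cite{Maskit:book}. This yields that $H$ lifts.

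\emph{Necessity: producing an $H$-invariant defining system.} Suppose $H$ lifts, and let $K=\langle G,\widehat{h}:h\in H\rangle$ be the associated MDC-Schottky extension group, so $G\lhd K$, $K/G\cong H$, and $K$ and $G$ share the (totally disconnected) region of discontinuity $\Omega$. I would pass to the closed $2$-orbifold ${\mathcal O}=\Omega/K$, through which $S\to{\mathcal O}$ is the Galois branched covering with deck group $H$. Because $K$ has totally disconnected limit set, $\Omega\to{\mathcal O}$ is a planar covering, and I would locate a finite system ${\mathcal C}$ of disjoint simple loops on ${\mathcal O}$, missing the cone points, that defines it --- either from an orbifold version of the planarity theorem or by reading off the structure loops of a Klein--Maskit decomposition of $K$ into finite groups, double-cusped parabolic groups, and loxodromic cyclic groups, as recalled in Section \ref{Sec:g=0,1}. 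Then I would set ${\mathcal F}$ to be the preimage of ${\mathcal C}$ in $S$ with the null-homotopic components (those coming from loops of ${\mathcal C}$ around cone points) deleted; it is $H$-invariant by construction, giving (i). To obtain (v) --- that ${\mathcal F}$ genuinely \emph{defines} $P$ and is not too coarse --- I would carry out the fundamental-group computation for the tower $\Omega\to S\to{\mathcal O}$: the non-cone loops of ${\mathcal C}$ already lift to loops in $S$, the components of their preimage normally generate in $\pi_{1}(S)$ exactly the normal closure (taken one level up) of the classes of those loops, and the cone loops contribute nothing; comparing with the planarity descriptions of $\Omega\to{\mathcal O}$ and $\Omega\to S$ gives $\langle\langle[{\mathcal F}]\rangle\rangle=N$. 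Condition (ii) is then immediate from Lemma \ref{lema1}.

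\emph{Necessity: the shape of ${\mathcal F}$, and the main obstacle.} By Lemma \ref{lema1} every loop of ${\mathcal F}$ separates $S$ and each piece of $S\setminus{\mathcal F}$ has genus $0$ or $1$; disjointness and separability make the dual graph $T$ of $S\setminus{\mathcal F}$ a tree with $|{\mathcal F}|+1$ vertices, and an Euler-characteristic count shows the genera of the pieces sum to $g$, so exactly $g$ of them have genus $1$. A genus-$0$ piece carries the trivial vertex group in the graph-of-groups decomposition of $G$, hence can be absorbed into an adjacent piece without changing $N$; I would perform all such absorptions $H$-equivariantly, the sole obstruction being a genus-$0$ piece that is $H$-invariant and whose two genus-$1$ neighbours are interchanged by $H$, so that after reduction ${\mathcal F}$ has at most one genus-$0$ piece, necessarily $H$-invariant. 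For $g=2$ this already forces ${\mathcal F}$ to consist of a single loop cutting $S$ into two one-holed tori, which is (iii). For $g\ge 3$, the finite group $H$ acts simplicially on $T$ and hence fixes a vertex or stabilizes (inverts) an edge: a fixed vertex is an $H$-invariant piece of genus $0$ or $1$, giving (iv.1); if $H$ fixes no vertex then no genus-$0$ piece survives the reduction and the stabilized edge is an $H$-invariant loop, all pieces having genus $1$, giving (iv.2). I expect the main obstacle to lie precisely in this necessity direction: first, checking that the system pulled back from the quotient orbifold is again a \emph{maximal} (defining) loop system for $P$ --- the fundamental-group bookkeeping in which cone-point loops and their powers must be tracked carefully --- and second, the $H$-equivariant reduction of ${\mathcal F}$ together with the verification that exactly the alternatives (iii), (iv.1), (iv.2) can occur. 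The sufficiency direction is essentially formal once one grants that a $G$-normalizing conformal automorphism of $\Omega$ is M\"obius.
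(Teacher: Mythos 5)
Your sufficiency argument is essentially the paper's, except for the last step: the paper does not need any rigidity of geometrically finite groups, it simply observes that $\Omega$ is a planar domain of class $O_{AD}$ (its complement is a Cantor set), so \emph{every} conformal automorphism of $\Omega$ is the restriction of a M\"obius transformation; your appeal to extension of $G$-normalizing automorphisms works too but is heavier than necessary. For necessity, you take a genuinely different route. The paper never passes to the quotient orbifold $\Omega/K$: it builds ${\mathcal F}$ directly on $S$ by taking a shortest simple closed geodesic $\delta_{1}$ that cuts off a genus-one piece and lifts to loops (such loops exist by Lemma \ref{lema1}), noting that minimality of length plus planarity forces $h(\delta_{1})$ to equal or be disjoint from $\delta_{1}$, taking the $H$-orbit ${\mathcal F}_{1}$, and then recursing on the unique $H$-invariant complementary piece $S_{1}$ (capped off) with the induced MDC-Schottky uniformization. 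Because at every stage the new loops cut off genus-one pieces and the leftover piece is $H$-invariant, the final system automatically has at most one complementary component of genus $\neq 1$, and that component is invariant; cases (iii), (iv.1), (iv.2) fall out of how the recursion terminates. Your approach instead front-loads everything into a defining system pulled back from $\Omega/K$ and then tries to prune it equivariantly.

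The genuine gap is in that pruning step, which you dismiss too quickly. You can delete at most one boundary loop of a given planar complementary piece without changing the normal closure (the omitted boundary class is a product of conjugates of the others inside that piece; omitting two independent boundary classes strictly enlarges the covering), so an $H$-equivariant reduction of a genus-zero piece $R$ requires its stabilizer $H_{R}$ to fix one of its boundary loops, and requires the whole $H$-orbit of that loop to be simultaneously removable (a single loop may bound two planar pieces, and translates must be removable from the translated pieces). Hence the ``sole obstruction'' you name --- an invariant genus-zero piece with two swapped neighbours --- is an undercount: an invariant planar piece whose $k\geq 3$ boundary loops are permuted transitively, or an orbit of non-invariant planar pieces whose stabilizers fix no boundary loop, also block the reduction, and you give no argument that these configurations cannot occur in the system pulled back from Maskit's structure loops. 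Without that, you have not shown the trichotomy (iii)/(iv.1)/(iv.2): the fixed-point-on-a-tree argument gives you \emph{an} invariant vertex or edge, but not that all remaining pieces outside it have genus one. The other flagged item --- that the pullback of ${\mathcal C}$ minus trivial components actually defines $P$ --- is also left as an unexecuted computation, but it is the reduction step that would actually fail as stated. The paper's bottom-up construction is designed precisely to avoid ever creating the superfluous planar pieces you would have to remove.
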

\begin{proof}
It follows, from the general theory on covering spaces, that the existence of a system of loops as stated will be sufficient for $H$ to lift as a group of conformal automorphisms of $\Omega$. 
In this case, the liftings of the elements of $H$ provide a group $K$, of conformal automorphisms of $\Omega$, containing $G$ as a finite index normal subgroup such that $K/G=H$.
As $\Omega$ is a domain of class $O_{AD}$ \cite{AS}, it follows that each conformal automorphism of $\Omega$ is the restriction of a M\"obius transformation, so each lifting is a M\"obius transformation. 

In the other direction, let us assume the group $H$ lifts, and let $K$ be the Kleinian group one obtains by the lifting of all the elements of $H$. So $G$ is a finite index normal subgroup of $K$ and $K/G=H$. By Lemma \ref{lema1}, there is a simple loop on $S$ dividing it into two surfaces, one of genus $1$ and the other of genus $g-1$, and which lifts (under $P$) to simple loops. We consider a shortest one (in the natural hyperbolic metric on $S$), say $\delta_{1} \subset S$.
By the minimum hyperbolic length property of $\delta_{1}$ and the planarity of $\Omega$, for each $h \in H$ either $h(\delta_{1})=\delta_{1}$ or $h(\delta_{1}) \cap \delta_{1}=\emptyset$. Also, each $h(\delta_{1})$ is a simple closed geodesic, of same length as $\delta_{1}$, that divides $S$ into $2$ surfaces, one of them of genus $1$. 
If $g=2$, then we are done as such a loop $\delta_{1}$ is necessarily $H$-invariant.
Let us now assume $g \geq 3$. Consider the collection ${\mathcal F}_{1}$ of simple loops obtained as orbit in $H$ of $\delta_{1}$. Clearly, ${\mathcal F}_{1}$ divides $S$ into an $H$-invariant collection of genus $1$ surfaces and one $H$-invariant surface $S_{1}$. If the genus of $S_{1}$ is either $0,1$, then we are done. Assume now on that the genus of $S_{1}$ is  $\gamma \geq 2$. 
Let $\Omega_{1} \subset \Omega$ be a connected component of $\Omega \setminus P^{-1}({\mathcal F}_{1})$
 such that $P(\Omega_{1})=S_{1}$. As each of the loops in ${\mathcal F}_{1}$ lift to a loop under $P$ and that such a family is $H$-invariant, the $G$-stabilizer of $\Omega_{1}$ is a MDC-Schottky group $G_{1}$ of rank $\gamma$ and its $K$-stabilizer is a Kleinian group $\Gamma_{1}<K$ so that $G_{1} \lhd \Gamma_{1}$ and $H=\Gamma_{1}/G_{1}$. By Lemma \ref{lema1}, we can work with this new surface $S_{1}$ (once we glue one disc to each border of it) as done with the surface $S$. This procedure will end after a finite number of steps to get the desired simple loops.
\end{proof}


\begin{rema}\label{Obs6}
(1) Associated to the collection ${\mathcal F}$ of Theorem \ref{loops} there is a finite tree. The vertices are the components of $S \setminus {\mathcal F}$ and the edges are given by each loop in ${\mathcal F}$. The edge, determined by a loop $\delta \in {\mathcal F}$, connects the two vertices corresponding to the two components of $S \setminus {\mathcal F}$ having $\delta$ as a common boundary loop. For instance, for $g=2$, the tree has exactly one edge and two vertices. (2) In Part (iv.2) of Theorem \ref{loops}, the $H$-invariant loop $\delta \in {\mathcal F}$ is the common boundary of two tori, say $T_{1}$ and $T_{2}$ (components of $S \setminus {\mathcal F}$). If we consider $T_{1} \cup \delta \cup T_{2}$, then we obtain a genus two $H$-invariant surface. In this case, the collection ${\mathcal F}^{*}={\mathcal F} \setminus \{\delta\}$ still $H$-invariant and $S \setminus {\mathcal F}^{*}$ consists of one $H$-invariant surface of genus two and a collection of genus one surfaces which are permuted between them under the action of $H$. But this new collection of loops does not determine the covering $P$.
\end{rema}

\subsection{MCD-Schottky structures}
We have seen that, if $G$ is an MCD-Schottky group of rank $g \geq 1$, then $M_{G}$ is an MCD-handlebody of genus $g$ whose conformal boundary is a closed Riemann surface of genus $g$.
Due to the Marden conjecture (or tame ends conjecture) proved by Agol \cite{Agol} and Calegari-Gabai \cite{Calegari}, each Kleinian group isomorphic to a free product of $g$ copies of ${\mathbb Z}^{2}$ provides a hyperbolic structure on the interior $M^{0}$ of an MCD-handlebody $M$ of genus $g \geq 1$. For each of these hyperbolic structures, the corresponding conformal boundary is a subset of the topological boundary of $M$.

\begin{prop}
Every hyperbolic structure on the interior of an MCD-handlebody of genus $g \geq 1$, whose conformal boundary is a closed Riemann surface of genus $g$, is provided by an MCD-Schottky group of rank $g$.
\end{prop}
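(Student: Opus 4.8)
The plan is to recognise the uniformising Kleinian group as a Klein--Maskit free product of rank-two parabolic groups, using tameness to control the topology and Maskit's decomposition of function groups to control the algebra. Let $\Gamma$ be a torsion-free Kleinian group with $\mathbb{H}^{3}/\Gamma=M^{0}$, so that $\Gamma\cong\pi_{1}(M^{0})\cong\pi_{1}(M)$; since $M$ is the connected sum of $g$ copies of $D^{*}\times S^{1}$ and $D^{*}\times S^{1}$ is homotopy equivalent to a torus, van Kampen's theorem gives $\Gamma\cong\mathbb{Z}^{2}\ast\cdots\ast\mathbb{Z}^{2}$ ($g$ factors). Being finitely generated, $\Gamma$ is topologically tame by Agol \cite{Agol} and Calegari--Gabai \cite{Calegari}, so $M^{0}$ is the interior of a compact $3$-manifold $N$ (a compact core) with $\pi_{1}(N)\cong\Gamma$. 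As $M^{0}$ is aspherical it is irreducible, hence so is $N$; moreover $N$ is orientable and has non-empty boundary (a $\mathbb{Z}^{2}$ subgroup prevents $\Gamma$ from being cocompact). By the free-product decomposition theorem for compact $3$-manifolds with boundary (Kneser's conjecture, in Stallings's form), together with the fact that the only compact orientable irreducible $3$-manifold with fundamental group $\mathbb{Z}^{2}$ is $T^{2}\times[0,1]$, I would conclude that $N$ is homeomorphic to the boundary connected sum of $g$ copies of $T^{2}\times[0,1]$; hence $\partial N=\Sigma\sqcup T_{1}\sqcup\cdots\sqcup T_{g}$ with the $T_{i}$ tori and $\Sigma$ a connected closed surface of genus $g$, and (exactly as for an ordinary handlebody) the inclusion $\Sigma\hookrightarrow N$ is $\pi_{1}$-surjective.

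Next I would show that $\Gamma$ is a geometrically finite function group. By tameness, each end of $M^{0}$ is geometrically finite or simply degenerate; the torus components $T_{i}$ of $\partial N$ are rank-two cusps, and the unique remaining end (the one facing $\Sigma$) is geometrically finite or degenerate. If it were degenerate the conformal boundary $\Omega/\Gamma$ would be empty, contradicting the hypothesis that it is a closed surface of genus $g$. So this end is geometrically finite, $\Gamma$ is geometrically finite with parabolic locus consisting of $g$ rank-two cusps, and $\Omega/\Gamma\cong\Sigma$. Finally, viewing $\mathbb{H}^{3}\cup\Omega$ as the universal cover of $(\mathbb{H}^{3}\cup\Omega)/\Gamma$, a connected component $\Delta\subset\Omega$ lying over $\Sigma$ has $\Gamma$-stabiliser equal to the image of $\pi_{1}(\Sigma)\to\pi_{1}(N)=\Gamma$, which is all of $\Gamma$ by the previous paragraph. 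Therefore $\Omega=\Delta$ is connected and $\Gamma$ is a function group with invariant component $\Omega$.

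With this in hand I would invoke Maskit's decomposition theorem for function groups \cite{Maskit:function1,Maskit:function}: $\Gamma$ is built, via the Klein--Maskit combination theorems \cite{Maskit:book,Maskit:CombIV}, from elementary groups, quasifuchsian groups, degenerate groups and cyclic loxodromic groups, with no degenerate pieces since $\Gamma$ is geometrically finite. Reading this through Bass--Serre theory, the decomposition exhibits $\Gamma$ as a graph of groups whose structure must be compatible with $\Gamma\cong\mathbb{Z}^{2}\ast\cdots\ast\mathbb{Z}^{2}$; since the latter is a free product of $g$ freely indecomposable groups, none of which is infinite cyclic, uniqueness of the Grushko decomposition together with an Euler-characteristic count ($\chi(\Gamma)=1-g$) rules out loxodromic ``handle'' factors and amalgamations, and forces the freely indecomposable factors to be exactly $g$ subgroups isomorphic to $\mathbb{Z}^{2}$. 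Among elementary, quasifuchsian and degenerate Kleinian groups the only ones isomorphic to $\mathbb{Z}^{2}$ are the elementary rank-two parabolic (double-cusped) groups: a $\mathbb{Z}^{2}$ subgroup of a Kleinian group is elementary and can contain neither torsion nor a loxodromic element, since the centraliser of a loxodromic in $\mathrm{PSL}_{2}(\mathbb{C})$ contains no discrete $\mathbb{Z}^{2}$. Hence $\Gamma$ is the Klein--Maskit free product of $g$ double-cusped parabolic groups, i.e.\ an MDC-Schottky group of rank $g$; for $g=1$ the argument degenerates to the statement that $\Gamma\cong\mathbb{Z}^{2}$ is itself a double-cusped parabolic group.

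The main obstacle I anticipate is the first half: correctly identifying the compact core with the boundary connected sum of $g$ copies of $T^{2}\times[0,1]$ — so that its genus-$g$ boundary is $\pi_{1}$-surjective — and combining this with the tameness theorem to rule out a degenerate end and thereby force $\Omega$ to be connected. Once $\Gamma$ is known to be a geometrically finite function group, the identification with an MDC-Schottky group is essentially formal, resting on Maskit's structure theory for function groups and the uniqueness of free-product decompositions.
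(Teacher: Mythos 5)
Your proof is correct and follows essentially the same route as the paper's: identify $\Gamma\cong\mathbb{Z}^{2}\ast\cdots\ast\mathbb{Z}^{2}$, observe that the $\mathbb{Z}^{2}$ subgroups must be rank-two parabolic (hence the cusps), and use the hypothesis that the conformal boundary is the full genus-$g$ surface to force geometric finiteness and the Klein--Maskit free-product structure. The paper's own proof is a four-sentence sketch of exactly this argument; your version merely supplies the supporting details (tameness and the compact core, the end analysis, Maskit's decomposition combined with Grushko uniqueness) that the paper leaves implicit.
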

\begin{proof}
Let $M$ be an MDC-handlebody of genus $g \geq 1$.
A hyperbolic structure on the interior $M^{0}$ of $M$ is provided by a Kleinian group $\Gamma$ so that ${\mathbb H}^{3}/\Gamma$ is homeomorphic to $M^{0}$. It follows that such a Kleinian group $\Gamma$ is isomorphic to a free product of $g$ copies of ${\mathbb Z}^{2}$. 
Each of the cusps on $M^{0}$ provides a rank two cusp of such a hyperbolic structure. As the conformal boundary is assumed to be all the topological boundary,  the group $\Gamma$ is necessarily an MDC-Schottky group of rank $g$.
\end{proof}

We will say that an MCD-Schottky group produces a {\it MDC-Schottky structure} on $M$.

\section{Main Results}\label{Sec:main}
The strategy used in \cite{Maskit:function1, Maskit:function,Maskit:decomposition}, to provide the structural description of a function group $K$, was to construct a collection of structure loops and structure regions (inside the region of discontinuity of $K$) and to use their $K$-stabilizers together with Klein-Maskit's combination theorems \cite{Maskit:book, Maskit:CombIV}. In the case that $K$ is an MCD-Schottky extension group, we will proceed to make such a structural description explicit (Theorem \ref{estructura}).

\subsection{Structural description of MDC-Schottky extension groups}
The decomposition structure of the MDC-Schottky extension groups of rank $g \geq 2$ is given by the following (its proof is provided in Section \ref{Sec:prueba1}).

\begin{theo}[Geometrical description of MDC-Schottky extension groups]\label{estructura}
Every MDC-Schottky extension group is obtained from the Klein-Maskit combination theorems as the free amalgamated product
$$
\begin{array}{c}
K=G*K^{0}*_{\langle t_{1} \rangle} K^{1} *_{\langle t_{3} \rangle} K^{3} *_{\langle t_{4} \rangle} \cdots *_{\langle t_{n} \rangle} K^{n},
\end{array}
$$
where 
\begin{enumerate}[leftmargin=15pt]
\item $G$ is a MDC-Schottky group;
\item  $K^{0}$ is either trivial or a finite group of M\"obius transformations or one of the MDC-Schottky extension groups of rank $1$, and 
\item each $K^{j}$ ($j\geq 1$) is a MDC-Schottky extension groups of rank $1$,
\item $\langle t_{1}\rangle=(G*K^{0}) \cap K^{1}$, 
 \item for $j=2,\ldots,n$, $\langle t_{j}\rangle=(G*K^{0}*_{\langle t_{1} \rangle} K^{1}*_{\langle t_{2} \rangle} \cdots *_{\langle t_{j-1} \rangle} K^{j-1})\cap K^{j}$, 
 \item in each case, $t_{j}$ is either the identity or an elliptic transformation of order belonging to $\{2,3,4,6\}$.
\end{enumerate}
\end{theo}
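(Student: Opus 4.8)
The plan is to run the Maskit decomposition machinery for function groups, but keeping track of the extra rigidity forced by the MDC-Schottky hypothesis, and to organize the whole argument around the $H$-invariant system of loops $\mathcal F$ produced by Theorem \ref{loops}. So the first step is: write $K$ as an extension $1 \to G \to K \to H \to 1$ with $G$ an MDC-Schottky group of rank $g$, view $H$ as a finite group of conformal automorphisms of $S=\Omega/G$, and invoke Theorem \ref{loops} to get the $H$-invariant system $\mathcal F$ of disjoint dividing simple loops together with its dual tree $T$ (Remark \ref{Obs6}(1)). Each vertex of $T$ is a component of $S\setminus\mathcal F$ of genus $0$ or $1$, and each edge is a loop of $\mathcal F$; $H$ acts on $T$. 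The second step is the standard fact that a finite group acting on a finite tree fixes either a vertex or the midpoint of an edge; I would use this to choose a ``central'' piece of the tree, which will become the amalgamation base. This is where the dichotomy (iv.1)/(iv.2) of Theorem \ref{loops} enters: case (iv.1) is the fixed-vertex case and case (iv.2) (together with Remark \ref{Obs6}(2)) is the fixed-edge case, which we merge into a single genus-two invariant piece so that, again, we have a fixed vertex $v_0$ of the (possibly collapsed) tree.

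Next I would pass from the topological picture on $S$ to the Kleinian picture in $\Omega$, exactly as in the proof of Theorem \ref{loops}: lift $\mathcal F$ to $P^{-1}(\mathcal F)\subset\Omega$, a $K$-invariant system of disjoint loops that (by planarity and property (v)) are precisely the structure loops cutting $\Omega$ into structure regions. For the distinguished vertex $v_0$ take a component $\Omega_0$ of $\Omega\setminus P^{-1}(\mathcal F)$ with $P(\Omega_0)$ the central piece; its $K$-stabilizer $K^0$ is then a Kleinian group normalizing its MDC-Schottky $G$-stabilizer, hence an MDC-Schottky extension group of rank $\gamma_0\in\{0,1\}$ — i.e.\ $K^0$ is trivial, a finite Möbius group (the rank $0$ list in Section \ref{Sec:g=0,1}), or one of the $K_j$ of rank $1$. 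For every other vertex $v$ of $T$ the corresponding piece is a genus-$1$ surface, and the $K$-stabilizer $K^v$ of a chosen lift is an MDC-Schottky extension group of rank $1$; choosing coset representatives one orders the remaining vertices $v_1,\dots,v_n$ as an exhaustion of $T$ from $v_0$ outward, so that each new piece meets the union of the previous ones along a single loop of $\mathcal F$. The $K$-stabilizer of that separating loop is the $K$-stabilizer of an annular structure region; since the corresponding loop in $\mathcal F$ lifts to a loop under $P$ (property (v)) this stabilizer is generated by a single elliptic $t_j$ (the free ``amalgamated'' part being killed by the lifting-to-a-loop condition), whose order lies in $\{2,3,4,6\}$ because these are exactly the rotation orders available in the rank-$1$ extension groups $K_j$ (equivalently, the cone angles of the torus quotients), with $t_j=\mathrm{id}$ allowed when the separating loop meets no non-trivial stabilizer. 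Feeding all of this, vertex by vertex along the exhaustion, into the first Klein–Maskit combination theorem (free products along $\langle t_j\rangle$) and, at the very end, amalgamating the $g$ rank-$1$ factors of $G$ itself, yields the asserted decomposition $K=G*K^0*_{\langle t_1\rangle}K^1*_{\langle t_3\rangle}K^3*\cdots*_{\langle t_n\rangle}K^n$, with conditions (1)–(6).

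The main obstacle I expect is verifying that the combination theorems actually apply at each stage — i.e.\ producing, for each structure loop, a genuinely separating Jordan curve (or a pair of curves) bounding the correct interactive pair of regions, with the intersection of the two factor groups being exactly the cyclic group $\langle t_j\rangle$ and nothing more. Two points need care here: first, that because each loop of $\mathcal F$ lifts to a \emph{loop} under $P$ (not merely to an arc), the corresponding edge stabilizer cannot contain a loxodromic or parabolic element — otherwise $\Omega_0$ or a neighboring piece would fail to be stabilized by an MDC-Schottky group, contradicting Lemma \ref{lema1} and the total disconnectedness of the limit set — so the edge group is purely elliptic, hence cyclic of one of the four admissible orders; second, that the fixed-edge case (iv.2) really can be absorbed into the fixed-vertex case by the genus-two regluing of Remark \ref{Obs6}(2) without destroying the property that $\mathcal F$ defines the covering $P$ — here one keeps the original $\mathcal F$ for property (v) but uses the collapsed tree only to locate $v_0$, so no contradiction arises. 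Once these compatibility checks are in place, the induction along the exhaustion of $T$ is routine bookkeeping, and the conditions (4)–(5) on the $\langle t_j\rangle$ as successive intersections are automatic from the way Klein–Maskit amalgamation records the identified subgroups.
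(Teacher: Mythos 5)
Your proposal is correct and follows essentially the same route as the paper: invoke Theorem \ref{loops} to obtain the $H$-invariant loop system $\mathcal F$ and its dual tree, lift to structure loops and structure regions in $\Omega$, identify the central invariant piece (genus $0$, genus $1$, or the merged genus-two piece from Remark \ref{Obs6}(2)) whose $K$-stabilizer becomes $K^{0}$, show the remaining region stabilizers are rank-one MDC-Schottky extension groups with trivial or cyclic edge stabilizers of order in $\{2,3,4,6\}$, and assemble $K$ by Klein--Maskit combination along an outward exhaustion of the tree. The only cosmetic difference is that you locate the central piece via the fixed-vertex-or-edge theorem for finite group actions on trees, whereas the paper encodes that dichotomy directly in parts (iv.1)/(iv.2) of Theorem \ref{loops}.
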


\subsection{An application to automorphisms of MDC-Schottky handlebodies}
A {\it conformal automorphism} of an MDC-handlebody $M$, with a fixed MDC-Schottky structure induced by an MCD-Schottky group $G$, is an orientation preserving self-homeomorphism of $M$ whose restriction to the interior is an isometry for the corresponding hyperbolic structure (or equivalently, its restriction to the conformal boundary surface is conformal). We denote by ${\rm Aut}^{+}(M)$ the group of conformal automorphisms of $M$.  
Note that, each MDC-Schottky extension group $K$, containing $G$ as a finite index normal subgroup, produces a finite order subgroup of ${\rm Aut}^{+}(M)$ and, reciprocally, all such subgroups of conformal automorphisms are obtained in this way.
The above classification of the MDC-Schottky extension groups (Theorem \ref{estructura}), permits us to provide the following classification of the groups of automorphisms of an MDC-handlebody with an MDC-Schottky structure and the possible quotient orbifolds (its proof is provided in Section \ref{Sec:prueba2}).

\begin{theo}\label{cor1}
Let $M$ be an MDC-handlebody of genus $g \geq 2$, with an MDC-Schottky structure, and let $H<{\rm Aut}^{+}(M)$ be non-trivial. Then exactly one of the following situations holds.
\begin{itemize}[leftmargin=15pt]
\item[(1)] $g=2$ and there is a compressible disc $D \subset M$, invariant under $H$, and $M \setminus D$ consists of two components, $M_{1}$ and $M_{2}$ (these are MDC-handlebodies of genus $1$). The group $H$ is is either 
\begin{enumerate}
\item[(1.1)] a cyclic group of order $2$, $3$, $4$ or $6$,  keeping invariant $M_{1}$ or 
\item[(1.2)] a cyclic group of oder two that interchanges $M_{1}$ with $M_{2}$ or 
\item[(1.3)] a dihedral group of order either $4$, $6$, $8$ or $12$ (containing one involution that permutes $M_{1}$ with $M_{2}$ and the maximal cyclic group preserves $M_{1}$).  
\end{enumerate}

In particular, ${\rm Aut}^{+}(M)$ is isomorphic to either: (i) ${\mathbb Z}_{2}$ or (ii) $D_{n}$, where $n \in \{2,4,6\}$. In case (i), the generic case, each of the two cusps is invariant, and in case  (ii) one involution permutes both cups.

\item[(2)]  $g \geq 3$ and there is a compressible disc $D \subset M$, invariant under $H$, and $M \setminus D$ consists of two components, $M_{1}$ and $M_{2}$ (these are MDC-handlebodies whose sum of genus equals to $g$). The group $H$ is either  
\begin{enumerate}
\item[(2.1)] a cyclic group of order $2$, $3$, $4$ or $6$, keeping invariant $M_{1}$ or 
\item[(2.2)] a cyclic group of order two that interchanges  $M_{1}$ with $M_{2}$ or 
\item[(2.3)] a dihedral group of order either $4$, $6$, $8$ or $12$, containing one involution that permutes $M_{1}$ with $M_{2}$ and the maximal cyclic group preserves $M_{1}$. 
\end{enumerate}

\item[(3)] $g \geq 3$, there is not  a compressible disc $D \subset M$ as in (2) above and there is a $H$-invariant MDC-handlebody $N \subset M$ of genus $\gamma \in \{0,1\}$.
\begin{itemize}
\item[(3.1)] If $\gamma=0$, then $H$ is a finite group isomorphic to either a cyclic group or a dihedral group or ${\mathcal A}_{4}$ or ${\mathcal A}_{5}$ or ${\mathfrak S}_{4}$.

\item[(3.2)] If $\gamma=1$, then $H$ is a finite group isomorphic to a finite group of 
automorphisms of a genus one surface.
\end{itemize}
\end{itemize}
\end{theo}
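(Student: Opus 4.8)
The plan is to move everything to the closed boundary surface $S=\partial M$ and to exploit the $H$-invariant loop system furnished by Theorem~\ref{loops}, reading the statement off from the action of $H$ on the associated tree. First I would reduce to $S$: a non-trivial $H<{\rm Aut}^{+}(M)$ corresponds, through the fixed MDC-Schottky uniformization $(\Omega,G,P)$, to a finite group of conformal automorphisms of $S$ that lifts to $(\Omega,G,P)$, and the Kleinian group $K$ generated by $G$ and the liftings is an MDC-Schottky extension group with $K/G\cong H$. Theorem~\ref{loops}, together with the explicit construction carried out in its proof, then supplies an $H$-invariant system $\mathcal F$ of homotopically non-trivial, pairwise disjoint, dividing simple loops defining $P$, in which every component of $S\setminus\mathcal F$ has genus at most $1$ and at most one component has genus $0$. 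I would then form the finite tree $T$ of Remark~\ref{Obs6}(1), on which $H$ acts simplicially. Since a finite group acting on a finite tree either fixes a vertex or inverts a unique edge, exactly one of the following occurs: (A) $H$ leaves some $\delta\in\mathcal F$ invariant, possibly exchanging its two sides; or (B) $H$ leaves no loop of $\mathcal F$ invariant and hence fixes a unique vertex of $T$, i.e.\ an $H$-invariant component $N$ of $S\setminus\mathcal F$ of genus $\gamma\in\{0,1\}$.

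The second step is a local analysis at an $H$-invariant component $N$ of genus $\gamma$. Its $K$-stabilizer $\Gamma_{N}$ (for a chosen lift $\Omega_{N}\subset\Omega$) contains $G_{N}=\Gamma_{N}\cap G$ — a rank-$\gamma$ MDC-Schottky group — as a finite-index normal subgroup with $\Gamma_{N}/G_{N}\cong H$, so $\Gamma_{N}$ is a rank-$\gamma$ MDC-Schottky extension group of the kind classified in \S\ref{Sec:g=0,1} (the building blocks of Theorem~\ref{estructura}). When $\gamma=0$, $G_{N}$ is trivial, so $H\cong\Gamma_{N}$ is a finite subgroup of ${\rm PSL}_{2}({\mathbb C})$, hence cyclic, dihedral, $\mathcal A_{4}$, $\mathcal A_{5}$ or $\mathfrak S_{4}$ (case (3.1)); when $\gamma=1$, $G_{N}\cong{\mathbb Z}^{2}$ and $H\cong\Gamma_{N}/G_{N}$ is a finite group of automorphisms of the torus ${\mathbb C}/G_{N}$ (case (3.2)). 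The crucial refinement is that, whenever $H$ leaves some $\delta\in\mathcal F$ invariant, at least one of the two components adjacent to $\delta$ has genus $1$ (there being at most one genus-$0$ piece); for such a torus $N_{\delta}$ the loop $\delta$ fills, in the structure picture, to a single distinguished point $p_{\delta}$ of ${\mathbb C}/G_{N_{\delta}}$, and this point is fixed by the subgroup of $H$ that preserves $\delta$ together with its two sides, which therefore injects into the point group of a torus and is cyclic of order in $\{1,2,3,4,6\}$.

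Assembling: in case (A), $\delta$ bounds a compressing disc $D$ and $M\setminus D$ has two components, which are genus-$1$ MDC-handlebodies when $g=2$ (case (1)) and MDC-handlebodies with genera summing to $g$ when $g\ge3$ (case (2)). If $H$ fixes both sides of $\delta$ it is cyclic of order in $\{2,3,4,6\}$ by the refinement, giving (1.1)/(2.1). If $H$ inverts $\delta$, its index-two subgroup $H_{0}$ fixing the two sides is cyclic $\mathbb Z_{n}$, $n\in\{1,2,3,4,6\}$: if $n=1$ then $H\cong\mathbb Z_{2}$ (cases (1.2)/(2.2)); otherwise I would show $H\cong D_{n}$, of order $2n\in\{4,6,8,12\}$, as follows. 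An orientation-preserving automorphism that exchanges the two sides of $\delta$ reverses the orientation of $\delta$, so every $\sigma\in H\setminus H_{0}$ restricts to the real-analytic curve $\delta$ as an orientation-reversing finite-order homeomorphism, i.e.\ as a reflection of the circle; hence $\sigma^{2}|_{\delta}={\rm id}$ and $\sigma h\sigma^{-1}|_{\delta}=h^{-1}|_{\delta}$ for a generator $h$ of $H_{0}$. Since a holomorphic automorphism of $S$ is determined by its restriction to $\delta$ and the non-trivial elements of $H_{0}$ act on $\delta$ by non-trivial rotations, this forces $\sigma^{2}={\rm id}$ and $\sigma h\sigma^{-1}=h^{-1}$, so $H$ is dihedral, giving (1.3)/(2.3). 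In case (B) necessarily $g\ge3$, and $H$ is the group attached to the unique $H$-fixed vertex, of genus $\gamma\in\{0,1\}$, giving (3.1) or (3.2). Finally, the assertions about ${\rm Aut}^{+}(M)$ in case (1) follow by applying the above to a maximal $H$ and using that the square and the hexagonal tori are each unique, which forces in those symmetric situations the presence of the exchanging involution and hence of a dihedral group $D_{n}$, $n\in\{2,4,6\}$.

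The main obstacle I expect is not any single computation but the combinatorial bookkeeping needed to make the trichotomy of the statement coincide with the action of $H$ on $T$: one must check that $\mathcal F$ can be chosen so that the genus-$0$ component, when present, is unique (so that an invariant loop always has a genus-$1$ neighbour, which is what yields a cyclic group of order $\le6$ rather than an arbitrary finite M\"obius group), and that filling the loops of $\mathcal F$ by compressing discs is carried out $H$-equivariantly. The other delicate point is the orientation-plus-identity-theorem argument, which is precisely what rules out a dicyclic (generalized quaternion) alternative for the exchanging groups and pins them down to dihedral.
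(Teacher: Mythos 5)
Your proposal is correct and follows essentially the same route as the paper's proof: lift $H$ to an MDC-Schottky extension group $K$, invoke Theorem~\ref{loops} for the $H$-invariant loop system $\mathcal F$, split according to whether $H$ stabilizes a loop of $\mathcal F$ or a complementary component of genus $\gamma\in\{0,1\}$, bound the side-preserving subgroup by the point-stabilizer of a torus obtained by filling in the invariant boundary loop, and obtain the dihedral relations from the fact that a side-swapping element reverses the orientation of the invariant loop together with the identity theorem (this is exactly the paper's Lemma~\ref{lema5-2}). The only real differences are that you derive the loop-versus-component dichotomy from the fixed-point theorem for finite group actions on the tree of Remark~\ref{Obs6} instead of reading it directly off Theorem~\ref{loops}(iii)/(iv), and that your (admittedly compressed) lattice-uniqueness argument for the ``in particular'' clause about the full group ${\rm Aut}^{+}(M)$ when $g=2$ actually supplies a justification that the paper's own proof omits entirely.
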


\subsection{Upper bounds for MCD-Schottky extension groups}

Let $K$ be an MDC-Schottky extension group containing, as a finite index normal subgroup, an MDC-Schottky group $G$ of rank $g \geq 2$. 
Let $\Omega$ be the region of discontinuity of $K$. Since $G$ has finite index in $K$, then $\Omega$ is also the region of discontinuity of $G$. 
As $H=K/G$ is a group of conformal automorphisms of $S=\Omega/G$, then $[K:G]=|H| \leq 84(g-1)$ by the Hurwitz bound.
Now, the only possibility for $[K:G]> 12(g-1)$ is for $S/H=\Omega/K$ to have signature of type $(0;a,b,c)$. But a geometrically finite function group providing a uniformization of an orbifold with such kind of signature should be a Fuchsian group of the first kind \cite{Kra}, a contradiction to the fact that the limit set of $K$, the same as for $G$, is a Cantor set. It follows then that $[K:G] \leq 12(g-1)$.

\begin{coro}\label{maxteo}
Let $K$ be an MDC-Schottky extension group containing an MDC-Schottky group $G$ of rank $g \geq 2$ as a finite index normal subgroup. 
\begin{enumerate}[leftmargin=15pt]
\item[(i)] If $g \geq 3$,  then $[K:G]<12(g-1)$.
\item[(ii)] If $g=2$, then $[K:G] \leq 12$ and, moreover, if $[K:G]=12$, then $K/G$ is isomorphic to the dihedral group of order $12$. 
\end{enumerate}
\end{coro}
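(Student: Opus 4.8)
Write $H=K/G$, so that $[K:G]=|H|$; recall from the paragraph above that $H$ acts on $S=\Omega/G$ with $S/H=\Omega/K$ of non-triangular signature, whence $|H|\le 12(g-1)$. Two observations frame the argument. By Riemann--Hurwitz, a hyperbolic $2$-orbifold of non-triangular signature attains the ratio $|H|=12(g-1)$ only if its signature is $(0;2,2,2,3)$ (the equation $2h-2+\sum_i(1-1/m_i)=1/6$ forces $h=0$ and cone orders $2,2,2,3$); and the dihedral group of order $12$ acts on a genus-$2$ surface with exactly this quotient signature, which is why $12$ is the sharp constant for $g=2$. The plan is therefore to read off part (ii) from Theorem~\ref{cor1}, and to obtain part (i) by ruling out the signature $(0;2,2,2,3)$ when $g\ge 3$.

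Part (ii), $g=2$: here $12(g-1)=12$, and Theorem~\ref{cor1}(1) lists the non-trivial possibilities for $H$ as cyclic of order $2,3,4,6$ or dihedral of order $4,6,8,12$. All have order $\le 12$, and order $12$ occurs only for the dihedral group of order $12$; that is the assertion.

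Part (i), $g\ge 3$: argue by contradiction, assuming $[K:G]=12(g-1)$, so $|H|=12(g-1)\ge 24$. Since $g\ge 3$, Theorem~\ref{cor1} puts us in case (2) or case (3); case (2) is excluded because there $|H|\le 12<24$. So we are in case (3): $M$ contains an $H$-invariant MDC-handlebody $N$ of genus $\gamma\in\{0,1\}$, and (via the MDC-Schottky structure and Theorem~\ref{loops}(iv.1), the relevant case since no $H$-invariant compressible disc is present) a system ${\mathcal F}$ of $g-\gamma$ dividing loops cuts $S$ into the $H$-invariant piece $N\cap S$ of genus $\gamma$ and exactly $g-\gamma$ further pieces of genus $1$, permuted by $H$ --- the count $g-\gamma$ being fixed by the identity $\sum_v g_v=g$. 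Pick one of these genus-$1$ pieces, $T$, with stabilizer $H_T<H$ and orbit $O$; since there are at most $g$ genus-$1$ pieces we get $|O|\le g$, hence $|H_T|=|H|/|O|\ge 12(g-1)/g\ge 8$ (the last inequality being equivalent to $g\ge 3$). But the $K$-stabilizer $\Gamma_T$ of a component of $\Omega\setminus P^{-1}({\mathcal F})$ lying over $T$ contains the associated maximal double-cusped factor $G_T\le G$ as a finite-index normal subgroup with $H_T\cong\Gamma_T/G_T$, so $\Gamma_T$ is a rank-$1$ MDC-Schottky extension group; by the rank-$1$ classification recalled in Section~\ref{Sec:g=0,1} (the list $K_1,K_2,K_3,K_4,K_6,K_{22}$, in which the double-cusped lattice has index at most $6$), $[\Gamma_T:G_T]\le 6$, contradicting $|H_T|\ge 8$. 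Hence $[K:G]<12(g-1)$.

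The step I expect to be the main obstacle is the last one in part (i): the identification $H_T\cong\Gamma_T/G_T$ and the bound $[\Gamma_T:G_T]\le 6$. These rest on $G_T$ being a \emph{maximal} double-cusped subgroup --- precisely the ``MDC'' of the hypothesis, which is also what makes the list $K_1,\dots,K_{22}$ exhaustive --- and on the fact, extracted from the proof of Theorem~\ref{loops} and ultimately from Theorem~\ref{estructura}, that the $K$-stabilizer of a structure region lying over a genus-$1$ piece is again a rank-$1$ MDC-Schottky extension group. I would also check the extremal case $g=3$, $\gamma=0$, where $12(g-1)/g=8$ and the contradiction $8\le|H_T|\le 6$ is narrowest, and verify the bookkeeping that attaches exactly $g-\gamma$ genus-$1$ handles to the genus-$\gamma$ core $N$.
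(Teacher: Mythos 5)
Your part (ii) is exactly the paper's (read off from Theorem \ref{cor1}), and the shape of your part (i) --- reduce to an $H$-invariant piece of genus $\gamma\in\{0,1\}$ and run an orbit count against a stabilizer bound of $6$ --- is essentially the paper's argument as well; the paper counts the boundary loops of the invariant planar piece and then does a case-by-case check over the isomorphism types of $H$ and small values of $g$, whereas your single inequality $|H_T|\ge 12(g-1)/g>6$ is cleaner and more uniform. The genuine gap is in the justification of the key bound $|H_T|\le 6$. You derive it from the claim that the double-cusped lattice has index at most $6$ in every rank-one MDC-Schottky extension group, resting on ``$G_T$ being a maximal double-cusped subgroup''. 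But a lattice is never maximal among lattices ($\frac{1}{N}G_T\supsetneq G_T$), and a rank-one MDC-Schottky extension group of a \emph{fixed} lattice $G_T$ can have arbitrarily large index: for $\tau=e^{\pi i/3}$ the group $\langle z+\frac{1}{N},\, z+\frac{\tau}{N},\, e^{\pi i/3}z\rangle$ is discrete and contains $\langle z+1,z+\tau\rangle$ as a normal subgroup of index $6N^{2}$. Equivalently, $H_T$ could a priori contain fixed-point-free translations of the closed-up torus; the list $K_1,\dots,K_{22}$ of Section 2, read as a list of pairs $(K,G)$, does not exclude these (indeed $K_{22}/G\cong{\mathbb Z}_2\times{\mathbb Z}_2$ already contains a translation). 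So ``$[\Gamma_T:G_T]\le 6$'' is not a consequence of the rank-one classification.

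What actually forces $|H_T|\le 6$ is the tree dual to ${\mathcal F}$: since the genus-$\gamma$ piece is $H$-invariant and the dual graph is a tree, $H_T$ must fix the unique boundary loop of $T$ lying on the path from $T$ to the invariant piece; hence $H_T$ acts on the closed-up torus fixing the corresponding point and is therefore cyclic of order in $\{1,2,3,4,6\}$. This is precisely the content of Lemmas \ref{lema2} and \ref{lema3} of the paper, which you need to invoke (or reprove) at this step; with that substitution your orbit count goes through, including the extremal case $g=3$, $\gamma=0$. Two minor points: the opening remark about ruling out the signature $(0;2,2,2,3)$ plays no role in your actual argument and should be dropped; and in the case $\gamma=1$ you should say explicitly that $T$ is chosen among the $g-1\ge 2$ non-invariant genus-one pieces, since the invariant one has stabilizer all of $H$ and no fixed boundary loop.
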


In terms of MDC-handlebodies, the above can be written as follows (this is the statement of Theorem \ref{mainteo1} in the introduction).

\begin{coro}\label{maxteo1}
Let $M$ be an MDC-handlebody of genus $g \geq 2$ with an MDC-Schottky structure.
If $ g \geq 3$, then $|{\rm Aut}^{+}(M)| < 12(g-1)$. If $g=2$, then $|{\rm Aut}^{+}(M)| \leq 12$ and if 
$|{\rm Aut}^{+}(M)|=12$, then ${\rm Aut}^{+}(M)$ is the dihedral group of order $12$.
\end{coro}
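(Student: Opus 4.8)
The plan is to derive Corollary~\ref{maxteo1} directly from Corollary~\ref{maxteo}, so essentially all the work is in proving the latter, which in turn rests on Theorem~\ref{cor1} (equivalently, on Theorem~\ref{loops} and the tree structure described in Remark~\ref{Obs6}). First I would recall that by the discussion immediately preceding Corollary~\ref{maxteo}, any MDC-Schottky extension group $K \rhd G$ of rank $g\geq 2$ satisfies $[K:G]=|H| \leq 12(g-1)$, where $H=K/G$ acts on $S=\Omega/G$ with quotient orbifold $S/H$ that is \emph{not} of triangular type $(0;a,b,c)$, since otherwise $K$ would be a Fuchsian group of the first kind, contradicting that its limit set is a Cantor set. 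So the Riemann–Hurwitz formula gives $2g-2 = |H|\bigl(2\gamma_0-2 + \sum_i (1-1/m_i)\bigr)$ with the bracket at least $1/6$ whenever $(\gamma_0;m_1,\dots)$ is non-triangular (the extremal non-triangular signatures being $(0;2,2,2,3)$ and $(1;2)$), hence $|H|\leq 12(g-1)$ with equality forcing signature $(0;2,2,2,3)$.

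Next, the refinement for $g\geq 3$: I would invoke Theorem~\ref{cor1} to enumerate the possibilities for $H<\mathrm{Aut}^+(M)$. In cases (2.1), (2.2), (2.3) the group $H$ is cyclic of order $\leq 6$ or dihedral of order $\leq 12$, so $|H|\leq 12 < 12(g-1)$ once $g\geq 3$. In case (3.1) with $\gamma=0$, the $H$-invariant genus-$0$ handlebody forces, via Theorem~\ref{estructura}, that $H=K^0$ is a finite Möbius group; but the quotient $S/H$ then still cannot be a sphere with three cone points (same Cantor-set obstruction), and one checks that for $H \in \{\mathcal A_4,\mathcal A_5,\mathfrak S_4, D_n, \text{cyclic}\}$ acting with non-triangular quotient, the genus $g$ grows faster than $|H|/12 + 1$; here I would argue that the $H$-invariant genus-$0$ piece $N$ is attached to the genus-$1$ pieces permuted by $H$ along $|H|/k$ loops (for suitable $k$ dividing the relevant point-stabilizer order), and counting genus through the tree of Remark~\ref{Obs6} shows $g-1 \geq |H|/12$ is never sharp when $\gamma=0$ with $g\geq 3$. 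In case (3.2) with $\gamma=1$, $H$ is a group of automorphisms of a torus, so $|H|$ is essentially unbounded a priori — \emph{but} the quotient orbifold of the torus under $H$ has signature in $\{(0;2,2,2,2),(0;2,3,6),(0;2,4,4),(0;3,3,3),(1;-)\}$, and combined with the genus-$1$ pieces permuted outside $N$, Riemann–Hurwitz again yields $g-1 > |H|/12$ strictly for $g\geq 3$. The key point throughout is that the equality $|H|=12(g-1)$ demands the quotient signature $(0;2,2,2,3)$, and I would show this signature is simply incompatible with the structure forced by Theorem~\ref{cor1} as soon as $g\geq 3$, because the $H$-invariant subhandlebody $N$ (or the invariant disc $D$) contributes cone points whose orders cannot combine into $(2,2,2,3)$ while leaving the genus-$1$ pieces free to be permuted.

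For $g=2$: by Theorem~\ref{cor1}(1), $H$ is cyclic of order $2,3,4,6$ or dihedral of order $4,6,8,12$, giving $|H|\leq 12$ at once; and equality $|H|=12$ occurs only for the dihedral group $D_6$ of order $12$ appearing in case (1.3). (One should also double-check via Riemann–Hurwitz that $12$ is consistent: a genus-$2$ surface admits a $D_6$-action with quotient $(0;2,2,2,3)$, realized here with the $D_6$ permuting the two tori $M_1,M_2$ via its order-$2$ reflections and preserving each via the cyclic $\mathbb Z_6$; the cyclic group of order $6$ acting on a single torus factor $K_6$ gives quotient $(0;2,3,6)$, and gluing the mirror copy produces exactly signature $(0;2,2,2,3)$ on the genus-$2$ surface.) Finally, translating back: $\mathrm{Aut}^+(M)$ is the group of \emph{all} conformal automorphisms, which corresponds to the maximal MDC-Schottky extension group $K$ with $K/G = \mathrm{Aut}^+(M)$, so $|\mathrm{Aut}^+(M)| = [K:G]$ and Corollary~\ref{maxteo1} is just Corollary~\ref{maxteo} verbatim.

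\begin{proof}[Proof of Corollary~\ref{maxteo1}]
Let $G$ be the MDC-Schottky group of rank $g$ giving the fixed MDC-Schottky structure on $M$, with region of discontinuity $\Omega$ and $S=\Omega/G$. By the Lifting theory (Theorem~\ref{loops}) and the definition of $\mathrm{Aut}^+(M)$, the full group $\mathrm{Aut}^+(M)$ lifts to an MDC-Schottky extension group $K$ with $G \lhd K$ of finite index and $K/G \cong \mathrm{Aut}^+(M)$; in particular $|\mathrm{Aut}^+(M)| = [K:G]$. The statement is now immediate from Corollary~\ref{maxteo}: if $g\geq 3$ then $[K:G] < 12(g-1)$, and if $g=2$ then $[K:G]\leq 12$ with equality only when $K/G \cong \mathrm{Aut}^+(M)$ is the dihedral group of order $12$.
\end{proof}

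The step I expect to be the main obstacle is the strict inequality in case (3.2) (and the borderline analysis of (3.1)) for $g\geq 3$: there, $|H|$ is not bounded by a universal constant, so one genuinely needs the Riemann–Hurwitz bookkeeping across the tree of Remark~\ref{Obs6}, tracking how the $H$-invariant genus-$\leq 1$ core $N$ and the permuted genus-$1$ leaves assemble into $S$, to rule out the extremal signature $(0;2,2,2,3)$ and in fact to get a strict gap. Everything else is either a finite case-check ($g=2$, and the cyclic/dihedral cases for $g\geq 3$) or a direct quotation of the non-triangularity obstruction already established before Corollary~\ref{maxteo}.
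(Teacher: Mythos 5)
Your proof of Corollary~\ref{maxteo1} is correct and is exactly what the paper does: the corollary is presented as a direct translation of Corollary~\ref{maxteo} via the correspondence $\mathrm{Aut}^{+}(M)\cong K/G$, and your supporting sketch of Corollary~\ref{maxteo} (reduction to Theorem~\ref{cor1}, the non-triangularity obstruction, and Riemann--Hurwitz/orbit counting on the $H$-invariant piece and its boundary loops) follows the same strategy as the paper's Section~\ref{Sec:proofmaxteo}. The only slip is cosmetic: $(1;2)$ is not an extremal non-triangular signature (its Riemann--Hurwitz contribution is $1/2$, not $1/6$), but this does not affect the conclusion that equality forces signature $(0;2,2,2,3)$.
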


\begin{rema}[A maximal example for $g=2$]\label{ejemplo1}
In the above corollary, it is stated that there is an MDC-Schottky extension group containing an MDC-Schottky group of rank two as a normal subgroup of index $12$. An example of this situation is provided below.
Let $\tau=e^{\pi i/3}$, $A(z)=z+1$, $B(z)=z+\tau$ and $E(z)=\tau z$. Choose a circle $D$ with center at $0$ and radius $1/4$ and let $p_{1}, p_{2} \in D$ be the intersection points of $D$ with the line through $0$ and $1+\tau$. Let $F$ be the conformal involution with fixed points $p_{1}$ and $p_{2}$.  Set $K=\langle A,B,E,F\rangle$ and $G=\langle A,B\rangle * \langle FAF,FBF\rangle$. In this case, (i) $G \lhd K$, (ii) $K/G \cong D_{6}$ (the dihedral group of order $12$) and (iii) $G$ is an MDC-Schottky group of rank $2$. So, $K$ is an MDC-Schottky extension group of rank $2$ as desired.
\end{rema}

\section{Proof of Theorem \ref{estructura}}\label{Sec:prueba1}
Let $K$ be an MDC-Schottky extension group of rank $g \geq 2$, with $\Omega$ as its region of discontinuity. By the definition, there is an MDC-Schottky group of rank $g$
$G \lhd K$. In this case, the region of discontinuity of $G$ is also $\Omega$. Set $S=\Omega/G$ and let $P:\Omega \to S=\Omega/G$ be the natural holomorphic Galois planar covering induced by $G$. Let $H=K/G$ be the induced group of conformal automorphisms of $S$. 
As, in this case, the group $H$ lifts,
Theorem \ref{loops} provides us with a system of loops  ${\mathcal F}$ on $S$ which is invariant under the action of $H$ and satisfies all the properties in that theorem stated.
Let ${\mathfrak T}$ be the finite tree induced by ${\mathcal F}$ (as seen in (1) of Remark \ref{Obs6}).
Each loop in $\mathcal F$ lifts, under $P$, to a collection of pairwise disjoint simple loops, each one with trivial $G$-stabilizer (since $G$ is torsion-free); called an {\it structure loop}. Let us denote by ${\mathcal G}$ the collection of all such structure loops.
Each of the connected components of $\Omega \setminus {\mathcal G}$ is called a {\it structure region}.

\begin{rema}
Each loop in ${\mathcal F}$ bounds a compressible disc in $M_{G}=({\mathbb H}^{3} \cup \Omega)/G$, and such a family of discs are pairwise disjoint, each one with border exactly one of these loops. These discs form a collection that is $H$-invariant and they cut-off $M_{G}$ into a collection of $g$ MDC-handlebodies of genus $1$ and maybe a $3$-ball.
If there is such a ball, then it is $H$-invariant. If there is no such a ball, then either (i) one of the MDC-handlebodies of genus $1$ is $H$ invariant or (ii) none of them is $H$-invariant, but there are two of them which are adjacent (that is, they have the same disc on the border) and the union is $H$-invariant. 
Each of the structure loops is the boundary of a {\it structure disc} in ${\mathbb H}^{3}$ so that all these structure discs form a $K$-invariant set. These structure discs divide ${\mathbb H}^{3}$ into {\it structure $3$-regions}; they contain in their boundary the structure regions. This, together with the finite tree ${\mathfrak T}$, provides a first picture for the decomposition of the action of $H$ in $M$ and the picture of the structural form of $K$.
\end{rema}

\begin{lemm}\label{lema5-1}
if $\delta \in {\mathcal G}$ be a structure loop and $R$ is a structure region, then both restrictions $P:\delta \to P(\delta)$ and $P:R \to P(R)$ are homeomorphism. 
\end{lemm}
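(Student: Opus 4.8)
The plan is to show that the covering $P$ is injective on each structure loop and on each structure region by exploiting the defining property of the structure loops: they are lifts of the loops in $\mathcal F$ that already lift to loops under $P$, together with the fact that $\mathcal F$ is the \emph{defining} family for the planar covering $P:\Omega\to S$ (property (v) of Theorem \ref{loops}).

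First I would treat the structure loop $\delta\in\mathcal G$. By construction $\delta$ is a connected component of $P^{-1}(c)$ for some loop $c\in\mathcal F$, and since $c$ lifts under $P$ to loops (property (v), planarity theorem), the restriction $P:\delta\to c=P(\delta)$ is a covering map of the circle $c$ that is a homeomorphism onto $c$ — indeed, a lift of a loop is a loop, so $\delta$ maps bijectively (degree one) onto $c$. The only subtlety is to rule out that $\delta$ wraps around $c$ several times, but that cannot happen: if the $G$-stabilizer of $\delta$ is trivial (which it is, as $G$ is torsion-free and $c$ is not a proper power, by Lemma \ref{lema1}), then $P$ restricted to $\delta$ has trivial deck group over $c$, forcing degree one.

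Next, for a structure region $R$, I would argue that $P:R\to P(R)$ is a covering map with trivial deck group, hence a homeomorphism. The point is that the $G$-stabilizer $G_R$ of $R$ acts on $R$, and $P(R)$ is exactly $R/G_R$; it suffices to show $G_R$ is trivial. Here I would use the minimality/maximality in the planarity theorem: the family $\mathcal F$ is the highest Galois planar covering for which the loops of $\mathcal F$ (with correct powers) lift to loops, and $G$ is the corresponding deck group. If some nontrivial $g\in G$ stabilized $R$, then, because $R$ is a component of the complement of the structure loops (which project exactly to $\mathcal F$), the element $g$ would act on the subsurface $P(R)\subset S\setminus\mathcal F$; but a nontrivial deck transformation of a planar covering of a surface cannot fix a whole complementary component setwise while acting freely — more precisely, $R$ being simply-connected-complement-type (planar) and $g$ acting freely and properly discontinuously on it, the quotient $R/\langle g\rangle$ would fail to embed in $S$ unless $g$ identifies boundary structure loops of $R$, contradicting that $P$ is injective on each structure loop (just proved) together with the tree structure $\mathfrak T$ of Remark \ref{Obs6}, in which distinct edges/vertices are not identified. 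Thus $G_R=1$ and $P|_R$ is injective; surjectivity onto $P(R)$ is automatic.

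The main obstacle I expect is the structure-region case, specifically making rigorous the claim that no nontrivial element of $G$ stabilizes a structure region $R$. The clean way to see this is: the structure regions are, by definition of the Maskit decomposition, precisely the $G$-translates of fundamental pieces cut out by the structure loops, and $G$ acts on the set of structure regions with the property that the quotient graph is the tree $\mathfrak T$ lifted appropriately; since the structure loops have trivial $G$-stabilizer and $P$ is a bijection on each of them, a $g\in G_R\setminus\{1\}$ would have to permute the (finitely many) boundary structure loops of $R$ nontrivially, but each such loop maps homeomorphically to a distinct loop of $\mathcal F$, and $g$ commutes with $P$ only trivially on those loops — a contradiction. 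I would write this as a short argument invoking that $G$ is a free product of its double-cusped factors (the MDC-Schottky structure) so that the structure regions are in bijection with the vertices of the Bass--Serre/structure tree and stabilizers of these vertices in $G$ are trivial by the planarity theorem's maximality. Once $G_R=1$, both assertions of the lemma follow at once.
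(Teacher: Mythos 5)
Your treatment of the structure loops is correct and is essentially the paper's own argument: a component $\delta$ of $P^{-1}(c)$, for $c\in\mathcal F$, covers $c$ with deck group generated by the image of $[c]$ in $G$, and this image is trivial because the loops of $\mathcal F$ lift to loops (equivalently, because the $G$-stabilizer of $\delta$ is a finite subgroup of the torsion-free group $G$). The paper's proof of the whole lemma is exactly this one-line stabilizer observation, applied to both loops and regions.

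The structure-region half of your argument has a genuine gap, and in fact the conclusion you are trying to reach is false for most structure regions. If $T\subset S\setminus\mathcal F$ is a component of genus one, the image of $\pi_1(T)$ in $G=\pi_1(S)/N$ is a $\mathbb Z^{2}$ factor of the MDC-Schottky group; it cannot be trivial, since the boundary loops already die in $G$ and if the handle loops of every such $T$ also died then $G$ itself would be trivial. Hence a component $R$ of $P^{-1}(T)$ has $G$-stabilizer $G_{R}\cong\mathbb Z^{2}$, and $P:R\to T$ is an infinite-degree covering, not a homeomorphism. This is consistent with Lemma \ref{lema5-1.1} immediately afterwards, which assigns to such an $R$ a $K$-stabilizer that is an MDC-Schottky extension group of rank $1$; the homeomorphism claim is only correct (and only needed) for regions whose image has genus zero, where $\pi_1(P(R))$ is generated by loops of $\mathcal F$ and therefore dies in $G$. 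The specific step of yours that fails is the claim that a nontrivial $g\in G_{R}$ would have to permute the \emph{finitely many} boundary structure loops of $R$, each mapping homeomorphically to a \emph{distinct} loop of $\mathcal F$: when $G_{R}\neq 1$ the region $R$ has infinitely many boundary structure loops, lying over the finitely many boundary loops of $P(R)$ with heavy repetition, so $g$ permutes them without any contradiction. Your Bass--Serre remark also points the wrong way: in the tree for $\mathbb Z^{2}\ast\cdots\ast\mathbb Z^{2}$ the vertex stabilizers are precisely the conjugates of the $\mathbb Z^{2}$ factors, i.e.\ nontrivial. (For what it is worth, the paper's own proof asserts that the $G$-stabilizer of a structure region is finite, which suffers from the same defect; the lemma should be read as applying to structure loops and to the regions with genus-zero image only.)
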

\begin{proof}
By the construction, both the collection of structure loops and the collection of structure regions are $K$-invariant. Moreover, the $G$-stabilizer of either a structure loop or a structure region is a finite subgroup of $G$, and as $G$ is torsion-free, it is trivial. 
\end{proof}

By the conditions on the set of loops ${\mathcal F}$ given in Theorem \ref{loops}, if $R$ is a structure region, then either $P(R)$ is a genus one bordered surface or $P(R)$ is a genus zero $H$-invariant surface.  As a consequence of Lemma \ref{lema5-1}, we observe the following.

\begin{lemm}\label{lema5-1.1}
The $K$-stabilizer of a structure loop $\delta \in {\mathcal G}$ is isomorphic to the $H$-stabilizer of $P(\delta) \in {\mathcal F}$. The $K$-stabilizer of a structure region $R$ so that $P(R)$ is a genus one surface is an MDC-Schottky extension group of rank $1$. If $R$ is a structure region so that $P(R)$ is a genus zero surface, then its $K$-stabilizer is isomorphic to $H$.
\end{lemm}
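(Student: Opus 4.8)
All three assertions follow from one mechanism, which I would isolate first. Put $H=K/G$, so that we have the extension $1\to G\to K\to H\to 1$, and let $X$ denote either a structure loop $\delta$ or a structure region $R$. Since $P:\Omega\to S$ is a Galois covering with deck group $G$ and $P(X)$ is connected (a loop of ${\mathcal F}$, respectively a component of $S\setminus{\mathcal F}$), the group $G$ acts transitively on the set of connected components of $P^{-1}(P(X))$; in particular $P$ induces a homeomorphism $X/\,\mathrm{Stab}_{G}(X)\cong P(X)$. Using that every element of $H$ lifts to $K$, one sees that the restriction of $K\to H$ to $\mathrm{Stab}_{K}(X)$ is onto $\mathrm{Stab}_{H}(P(X))$: given $h\in\mathrm{Stab}_{H}(P(X))$ and a lift $k\in K$, we have $P(k(X))=h(P(X))=P(X)$, so $k(X)=g(X)$ for some $g\in G$ by transitivity, and then $g^{-1}k\in\mathrm{Stab}_{K}(X)$ projects to $h$. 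Its kernel is $\mathrm{Stab}_{K}(X)\cap G=\mathrm{Stab}_{G}(X)$, so in every case
\[
\mathrm{Stab}_{K}(X)/\mathrm{Stab}_{G}(X)\;\cong\;\mathrm{Stab}_{H}(P(X)),
\]
and the problem reduces to computing $\mathrm{Stab}_{G}(X)$.

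If $X=\delta$ is a structure loop, then $P(\delta)\in{\mathcal F}$ lifts to the loop $\delta$, so $P:\delta\to P(\delta)$ is a homeomorphism (Lemma~\ref{lema5-1}) and $\mathrm{Stab}_{G}(\delta)=\{1\}$; hence $\mathrm{Stab}_{K}(\delta)\cong\mathrm{Stab}_{H}(P(\delta))$, which is the first assertion. If $R$ is a structure region with $P(R)$ of genus $0$, then by Theorem~\ref{loops} $P(R)$ is the unique $H$-invariant genus-zero piece of $S\setminus{\mathcal F}$; its fundamental group is generated by its boundary loops, which all belong to ${\mathcal F}$ and therefore map trivially into $G$, so $\mathrm{Stab}_{G}(R)$ (the image of $\pi_{1}(P(R))$ in $G$) is trivial and $P:R\to P(R)$ is a homeomorphism (again Lemma~\ref{lema5-1}). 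Since $\mathrm{Stab}_{H}(P(R))=H$, this gives $\mathrm{Stab}_{K}(R)\cong H$, the third assertion.

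The case of a structure region $R$ with $P(R)$ of genus $1$ is the crux. Here $G_{R}:=\mathrm{Stab}_{G}(R)$ is the deck group of the Galois covering $R\to P(R)$, with $P(R)$ a genus-one bordered surface and $R\subset\widehat{\mathbb C}$ planar. Its boundary loops lie in ${\mathcal F}$, hence are killed in $G_{R}$; since in $\pi_{1}(P(R))$ a product of the boundary loops equals the commutator $[a,b]^{\pm1}$ of the two handle generators $a,b$, it follows that $G_{R}$ is abelian, generated by the images of $a$ and $b$. Being also torsion-free and discrete, $G_{R}$ is trivial, infinite cyclic, or a rank-two parabolic group; the trivial case and the parabolic-cyclic case are impossible, since the quotient of a planar domain by such a group is again of genus $0$, whereas $P(R)$ has genus $1$. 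To rule out an infinite-cyclic loxodromic $G_{R}$ and to pin down the rank-two case, I would invoke the structure of $G$: since $G$ is an MDC-Schottky group, i.e.\ a free product of $g$ double-cusped parabolic groups realizing (among geometrically finite uniformizations with $\Omega$ a full invariant component) the highest uniformization, its restriction over a genus-one piece of $S\setminus{\mathcal F}$ is precisely the cover whose deck group is the corresponding $\mathbb{Z}^{2}$ free factor; concretely $G_{R}$ is a $G$-conjugate of one of the double-cusped parabolic free factors of $G$, hence a rank-one MDC-Schottky group. Finally $G_{R}=G\cap\mathrm{Stab}_{K}(R)\lhd\mathrm{Stab}_{K}(R)$, and by the displayed isomorphism $[\mathrm{Stab}_{K}(R):G_{R}]=|\mathrm{Stab}_{H}(P(R))|<\infty$; thus $\mathrm{Stab}_{K}(R)$ contains the rank-one MDC-Schottky group $G_{R}$ as a finite-index normal subgroup, i.e.\ it is a rank-one MDC-Schottky extension group (one of the groups $K_{j}$ classified above), which is the second assertion.

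The main obstacle is precisely this last identification: showing that the deck group of a genus-one structure region is a double-cusped parabolic group, equivalently that the MDC-Schottky covering, restricted to a genus-one piece of $S\setminus{\mathcal F}$, is the full $\mathbb{Z}^{2}$-cover rather than a cyclic cover or one carrying a loxodromic deck transformation. Everything else is bookkeeping with the extension $1\to G\to K\to H\to 1$ and the transitivity of $G$ on the fibres of $P$.
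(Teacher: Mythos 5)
Your proof is correct, and it is genuinely more complete than what the paper provides. The paper gives no argument for Lemma \ref{lema5-1.1} beyond the phrase ``as a consequence of Lemma \ref{lema5-1}'', and the proof of Lemma \ref{lema5-1} asserts that the $G$-stabilizer of \emph{every} structure region is finite, hence trivial. That is fine for structure loops and for the genus-zero region, but it cannot be taken literally for a structure region $R$ with $P(R)$ of genus one: the second claim of Lemma \ref{lema5-1.1} itself makes $\mathrm{Stab}_{K}(R)$ an infinite group, so $\mathrm{Stab}_{G}(R)$ must be the rank-two parabolic deck group of the infinite covering $R \to P(R)$, exactly as you compute (and the paper later concedes this by describing these regions as infinitely bordered and invariant under a rank-one MDC-Schottky extension group). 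Your packaging of all three claims through the single isomorphism $\mathrm{Stab}_{K}(X)/\mathrm{Stab}_{G}(X) \cong \mathrm{Stab}_{H}(P(X))$, obtained from the transitivity of $G$ on the components of $P^{-1}(P(X))$, is the right mechanism, and your identification of the genus-one deck group as a $G$-conjugate of a double-cusped parabolic free factor, rather than a loxodromic cyclic group, is precisely the point the paper leaves implicit. That last step, which you flag as the main obstacle, can be closed off as you suggest, or more combinatorially: since all structure loops have trivial stabilizer, Bass--Serre theory over the finite tree of Remark \ref{Obs6} exhibits $G \cong (\mathbb{Z}^{2})^{*g}$ as the free product of the structure-region stabilizers, each a torsion-free quotient of $\mathbb{Z}^{2}$, and Grushko's theorem on ranks forces each genus-one region stabilizer to be a full $\mathbb{Z}^{2}$ (hence double-cusped parabolic, as every non-loxodromic of $G$ is parabolic in a conjugate of a factor). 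In short, you prove the same statement, but by a route that supplies the argument the paper omits and repairs the over-broad wording of Lemma \ref{lema5-1}.
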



\subsection{Case ${\bf g=2}$}\label{Sec:g>2-1}
In this case ${\mathcal F}=\{\alpha\}$ and $\alpha$ is invariant under $H$. Let $S_{1}$ and $S_{2}$ be the two bordered tori  in $S \setminus \{\alpha\}$ and let $H_{S_{j}}$ be the $H$-stabilizer of $S_{j}$, for $j=1,2$.

\begin{lemm}\label{lema5-2}
$H$ is either (i) trivial or (ii) a cyclic group of order $2$ that permutes both $S_{1}$ and $S_{2}$ or (iii) a cyclic group of order $d \in \{2,3,4,6\}$ that preserves each of the two $S_{1}$ and $S_{2}$ or (iv) a dihedral group of order $r \in \{2,4,6,8,12\}$ generated by an involution permuting $S_{1}$ with $S_{2}$ and a cyclic group that preserves each $S_{j}$.  In cases (i) and (ii) the groups $H_{S_{j}}$ are trivial, in case (iii) $H=H_{S_{j}}$ and in case (iv) $H_{S_{j}}$ is the index two cyclic subgroup of $H$ of order $r/2$.
\end{lemm}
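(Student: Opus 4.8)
The plan is to exploit that $H$ acts faithfully and conformally on $S$ while preserving the single loop $\alpha$, hence on the pair $\{S_1, S_2\}$ of bordered tori. First I would split $H$ according to whether it preserves each $S_j$ or swaps them: let $H^{+} < H$ be the subgroup fixing $S_1$ (equivalently $S_2$) setwise, which has index $1$ or $2$ in $H$. Each element of $H^{+}$ restricts to a conformal automorphism of the closed torus $\overline{S_1} = S_1 \cup (\text{glued disc along }\alpha)$ that fixes setwise the puncture/disc, i.e. fixes a point; so $H^{+}$ embeds in the group of conformal automorphisms of a genus-one Riemann surface fixing a point. By the classical description of automorphism groups of tori (as recalled via the groups $K_j$ in Section \ref{Sec:g=0,1}), such a point-stabilizing subgroup is cyclic of order $d \in \{2,3,4,6\}$ (or trivial), with the orders $3,4,6$ forced onto the hexagonal or square lattices. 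The faithfulness of the $H$-action on $S$ — and hence on each $\overline{S_j}$, since an automorphism trivial on $S_1$ and preserving $\alpha$ would have to be trivial on $S_2$ as well by the identity principle along $\alpha$ — gives that $H^{+} \hookrightarrow H_{S_1}$ is in fact an isomorphism $H^{+} = H_{S_1} = H_{S_2}$, cyclic of order $d$.

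Next I would handle the swapping case, i.e. when $H^{+} \neq H$. Pick any $\sigma \in H \setminus H^{+}$; then $H = \langle H^{+}, \sigma \rangle$ with $H^{+} \lhd H$ of index two, and $\sigma S_1 = S_2$. I claim $\sigma$ can be chosen to be an involution: conjugation by $\sigma$ induces an automorphism $\phi$ of the cyclic group $H^{+}$, and $\sigma^2 \in H^{+}$; a short cohomological / direct-computation argument (or: lift to the torus double cover $\overline{S_1}$, where $\sigma^2$ fixes a point and is conjugate into a cyclic group) shows one may adjust $\sigma$ within its coset to get $\sigma^2 = \mathrm{id}$. Then $H = H^{+} \rtimes \langle \sigma \rangle$. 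To pin down the structure, restrict to $\overline{S_1}$: the composite $\sigma^2 = \mathrm{id}$ together with $\sigma h \sigma^{-1} = h^{\pm 1}$ for $h \in H^{+}$ (the only automorphisms of a cyclic group that can be realized here, as $\sigma$ interchanges the two "sides" of $\alpha$ and reverses orientation transversally) forces $H$ to be either cyclic of order $2$ (when $H^{+}$ is trivial, case (ii)) or dihedral of order $2d$ with $d \in \{2,3,4,6\}$, i.e. of order in $\{4,6,8,12\}$ (case (iv)). Here I would use that the involution $\sigma$ acts on $S_1 \cong \mathbb{C}/G_1$ by an orientation-preserving isometry exchanging it with $S_2$, so that $\sigma h \sigma^{-1}$ acts on $S_1$ as the "reflected" rotation, which is $h^{-1}$ in the cyclic group; this gives precisely the dihedral relation.

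Finally I would assemble the claims about $H_{S_j}$: in cases (i) and (ii) $H^{+}$ is trivial so $H_{S_j} = \{\mathrm{id}\}$; in case (iii) $H = H^{+} = H_{S_j}$; and in case (iv) $H_{S_j} = H^{+}$ is the index-two cyclic subgroup of order $r/2$ of the dihedral group $H$ of order $r$. The main obstacle I expect is the swapping case: one must be careful that $\sigma$ really can be normalized to an involution and that conjugation by it inverts $H^{+}$ rather than acting in some other way — this is where the geometry of the single dividing loop $\alpha$ and the orientation-preserving hypothesis on conformal automorphisms of $M$ must be used, rather than pure group theory, since abstractly $\mathrm{Aut}(\mathbb{Z}/6)$ also contains the trivial automorphism (which would give $\mathbb{Z}/6 \times \mathbb{Z}/2$, not realizable here). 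Closing that gap cleanly — ideally by a direct M\"obius-normal-form argument using that the lifts normalize the MDC-Schottky group $G = G_1 * \sigma G_1 \sigma^{-1}$ — is the crux.
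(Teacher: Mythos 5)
Your overall route coincides with the paper's: split $H$ according to whether it preserves or swaps the two bordered tori, identify the preserving subgroup $H^{+}$ with a point-stabilizing group of conformal automorphisms of the closed torus obtained by gluing a disc along $\alpha$ (hence trivial or cyclic of order $2,3,4,6$), and then argue that the swapping case yields $\mathbb{Z}_{2}$ or a dihedral group. The gap is exactly where you flag it, and the sketches you offer to close it do not work. The move ``adjust $\sigma$ within its coset to get $\sigma^{2}=\mathrm{id}$'' fails as pure group theory: if $H$ were cyclic of order $2d$ with $H^{+}=\langle\sigma^{2}\rangle$, then no element of the coset $\sigma H^{+}$ is an involution (for $d=2$, both elements of the coset have order $4$), so there is nothing to normalize; what must be shown is that \emph{every} swapping element is an involution, and that is precisely what excludes $\mathbb{Z}_{2d}$ and $\mathbb{Z}_{d}\times\mathbb{Z}_{2}$. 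Likewise ``$\sigma^{2}$ fixes a point of $\overline{S_{1}}$, hence is conjugate into a cyclic group'' gives no control on its order.

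The missing ingredient, which is the paper's one-line argument, is to restrict everything to the invariant geodesic $\alpha$. Since $j\in H$ preserves the orientation of $S$, keeps $\alpha$ invariant and interchanges the two sides of $\alpha$, it must reverse the orientation of $\alpha$; a finite-order orientation-reversing homeomorphism of a circle is an involution, so $j^{2}$ fixes $\alpha$ pointwise, and a nontrivial conformal automorphism of a closed surface of genus $\geq 2$ cannot fix a curve pointwise, whence $j^{2}=\mathrm{id}$ for \emph{every} swapping element $j$. The same restriction gives the dihedral relation: $h\in H^{+}$ acts on $\alpha$ as an orientation-preserving finite-order circle map (a rotation), so $jhj^{-1}$ acts on $\alpha$ exactly as $h^{-1}$ does, and the identity principle for conformal maps upgrades the equality $jhj^{-1}=h^{-1}$ from $\alpha$ to all of $S$. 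With this inserted at the point you call the crux, your plan becomes the paper's proof; the remaining bookkeeping on $H_{S_{j}}$ in your last paragraph is correct.
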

\begin{proof}
Let us assume first that $H$ does not contain an element that permutes $S_{1}$ with $S_{2}$. In that situation, $H$ is a finite group of conformal automorphisms of $S_{1}$ keeping invariant the loop $\alpha$. It follows that there is a torus (by gluing a disc to $S_{1}$ along the loop $\alpha$) admitting $H$ as a group of conformal automorphisms fixing a point. Then $H$ should be a finite cyclic group. As the non-trivial finite cyclic groups acting on genus one with fixed points are known to be of order $d \in \{2,3,4,6\}$, we are done in this case. If there is some $j \in H$ that permutes $S_{1}$ with $S_{2}$, then $j^{2}$ keeps invariant each $S_{j}$. Let $H^{*}$ be the index two subgroup of $H$ keeping invariant $S_{1}$. By the previous, $H^{*}$ is either the trivial group or a cyclic group of order $d \in \{2,3,4,6\}$. If $H^{*}$ is trivial, then $j^{2}=1$ and $H$ is a cyclic group of order two generated by an involution that permutes $S_{1}$ with $S_{2}$.
Let us now assume $H^{*}$ is non-trivial, say generated by $h$. 
As $j$ is orientation preserving in $S$, keeps invariant $\alpha$ and interchanges $S_{1}$ with $S_{2}$, it must interchanges the orientation of $\alpha$ and $j^{2}=1$. It follows, since $h$ preserves the orientation of $\alpha$, that $jhj^{-1}=h^{-1}$, in particular, $H$ is a dihedral group as desired.
\end{proof}

Let $\delta \in {\mathcal G}$ be any structure loop and let 
$R_{1}$ and $R_{2}$ be the two structure regions having $\delta$ at the border.
We may assume $P(R_{1})=S_{1}$ and  $P(R_{2})=S_{2}$. 
Let $K_{\delta}$ be the $K$-stabilizer of $\delta$ and let $K_{R_{j}}$ be the $K$-stabilizer of $R_{j}$, for $j=1,2$. By Lemma \ref{lema5-1} $K_{\delta} \cong H$ and $K_{R_{j}}$ is a MDC-Schottky extension group of rank $1$ containing a MDC-Schottky group of rank $1$, say $G_{j}$, so that $K_{R_{j}}/G_{j}=H_{S_{j}}$. By Lemma \ref{lema5-2}, one of the following holds.
\begin{enumerate}[leftmargin=15pt]
\item If $K_{\delta}$ is trivial, then $K=K_{R_{1}}*K_{R_{2}}$, that is, a MDC-Schottky group of rank $2$.
\item If $K_{\delta}$ is generated by an elliptic transformation of order $d \in \{2,3,4,6\}$ (keeping invariant each $R_{j}$), then both $K_{R_{1}}$ and $K_{R_{2}}$ are conjugate to the same MDC-Schottky extension group of rank $1$ and $K=K_{R_{1}}*_{K_{\delta}}K_{R_{2}}$
(see the left Figure \ref{figura1(ii)} for the case $K_{R_{j}}$ conjugate to $K_{d}$, $d=2,3,4,6$). The orbifold uniformized by $K$ has signature (a) $(0,6;2,2,2,2,2,2)$ if $d=2$, (b) $(0,4;3,3,3,3)$ if $d=3$, (c) $(0,4;2,2,4,4)$ if $d=4$ and (d) $(0,4;2,2,3,3)$ if $d=6$. 
\item If $K_{\delta}$ is generated by  a conformal involution that interchanges $R_{1}$ with $R_{2}$, then $K=K_{R_{1}}*K_{\delta}$ (see the middle Figure \ref{figura1(iii)}). The group $K$ uniformizes an orbifold with signature $(1,2;2,2)$.
\item If $K_{\delta}$ is a dihedral group generated by an elliptic transformation of order $d \in \{2,3,4,6\}$ and a conformal involution that permutes both structure regions $R_{1}$ and $R_{2}$, then $K=K_{R_{1}}*_{L}K_{\delta}$, where $L<K_{\delta}$ is the cyclic group of order $d$ (see the right
Figure \ref{figura1(iv)}). The orbifold uniformized by $K$ has signature (a) $(0,5;2,2,2,2,2)$ if $d=2$, (b) $(0,4;3,3,2,2)$ if $d=3$, (c) $(0,4;2,2,2,4)$ if $d=4$ and (d) $(0,4;2,2,2,3)$ if $d=6$. 
\end{enumerate}




\begin{figure}[htb]
    \begin{minipage}[t]{.3\textwidth}
        \centering
        \includegraphics[width=\textwidth]{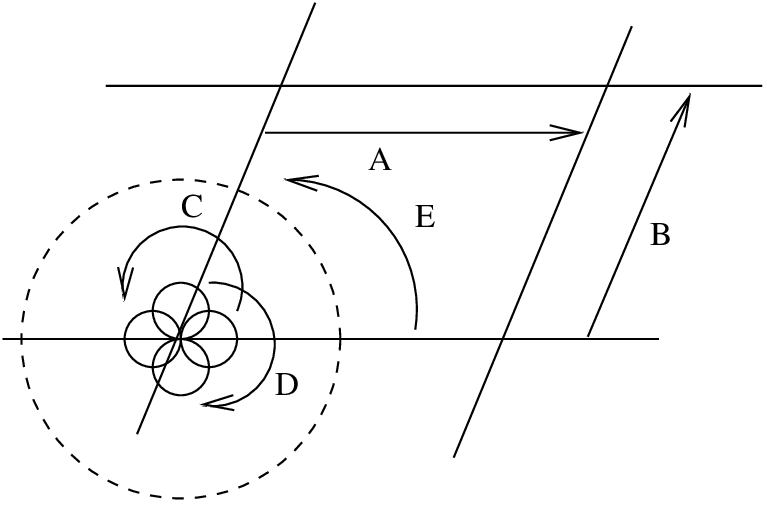}
       \subcaption{\tiny $K=\langle A,B, E\rangle *_{\langle E \rangle}\langle E,C,D\rangle$, $E^{d}=1$}
        \label{figura1(ii)}
    \end{minipage}
    \hfill
    \begin{minipage}[t]{.3\textwidth}
        \centering
        \includegraphics[width=\textwidth]{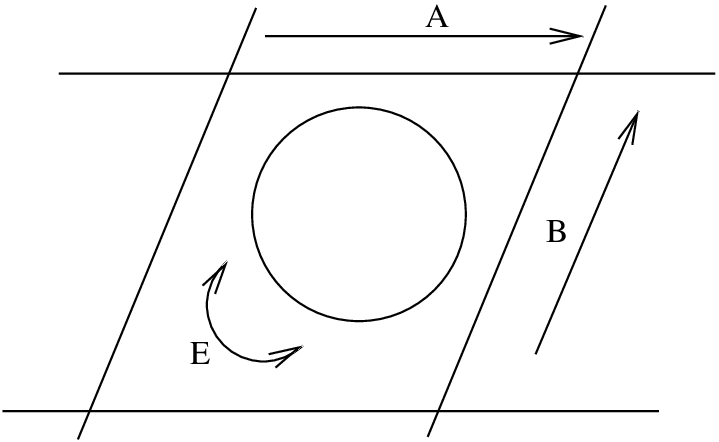}
        \subcaption{\tiny $K=\langle A,B,E\rangle$, $E^{2}=1$}
        \label{figura1(iii)}
    \end{minipage}
    \hfill
    \begin{minipage}[t]{.3\textwidth}
        \centering
        \includegraphics[width=\textwidth]{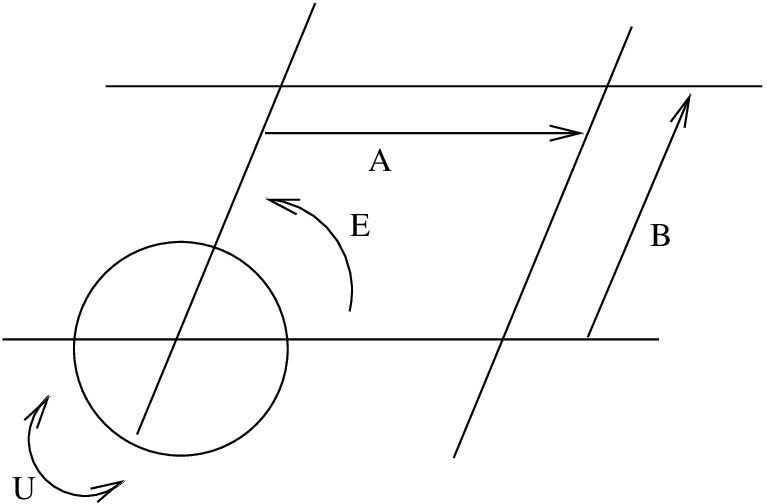}
        \subcaption{\tiny $K=\langle A, B,E,U\rangle$, $U^{2}=1=E^{d}$}
        \label{figura1(iv)}
    \end{minipage}  
   \caption{The groups $K$}
\end{figure}

\subsection{Case ${\bf g \geq 3}$}\label{Sec:g>2-2}
We divide the proof into three cases, depending on the existence of an $H$-invariant region in $S \setminus {\mathcal F}$.
\subsubsection{First case}
Let us assume there is a $H$-invariant genus zero surface $\widehat{R}_{0} \subset S \setminus {\mathcal F}$ and let us denote its boundary loops by $\alpha_{1},\ldots,\alpha_{r}$. In this case the finite tree  ${\mathfrak T}$ has a central vertex, associated to $\widehat{R}_{0}$, with exactly $r$ edges containing such a central vertex.

As $H$ stabilizes the planar surface $\widehat{R}_{0}$, it follows that $H$ is either (i) the trivial group or (ii) a finite cyclic group or (iii) ${\mathcal A}_{4}$ or (iv) ${\mathcal A}_{5}$ or(v)  ${\mathfrak S}_{4}$. Also, the $H$-invariance property of $\widehat{R}_{0}$ asserts that the $H$-stabilizer of any $\alpha_{j} \in {\mathcal F}$ is either trivial or a finite cyclic group which cannot interchange the two structure components about $\alpha_{j}$ (otherwise, there will be an element of $H$ that does not preserves $\widehat{R}_{0}$).

If $m=1,\ldots,r$, then we denote by $\widehat{R}_{m} \subset S \setminus \{\alpha_{m}\}$ the surface (different from $\widehat{R}_{0}$) of genus $g_{m} \geq 1$ containing $\alpha_{m}$ on its border; we denote by $H_{m}$ its $H$-stabilizer.

\begin{lemm}\label{lema2}
$H_{m}$ is also the $H$-stabilizer of $\alpha_{m}$ and it is either the trivial group or a cyclic group of order $d \in \{2,3,4,6\}$.
\end{lemm}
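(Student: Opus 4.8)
The plan is to argue exactly as in the proof of Lemma \ref{lema5-2}, but now using the $H$-invariance of the central planar piece $\widehat{R}_{0}$ to rule out, from the start, any element of $H$ that could interchange the two structure components adjacent to $\alpha_{m}$. First I would note that, since $\alpha_{m}$ is one of the boundary loops of the $H$-invariant surface $\widehat{R}_{0}$, any $h\in H$ fixing (setwise) $\alpha_{m}$ must fix $\widehat{R}_{0}$, hence must fix the other adjacent surface $\widehat{R}_{m}$ as well; conversely any $h\in H_{m}$ (fixing $\widehat{R}_{m}$) sends $\alpha_{m}$, the unique boundary loop of $\widehat{R}_{m}$ lying on $\partial\widehat{R}_{0}$, to a boundary loop of $\widehat{R}_{m}$ that still lies on $\partial\widehat{R}_{0}$, and a short argument (either via the tree ${\mathfrak T}$, where $\alpha_{m}$ is the unique edge joining the vertex of $\widehat{R}_{0}$ to that of $\widehat{R}_{m}$, or via the remark already made that the $H$-stabilizer of $\alpha_{m}$ cannot interchange its two sides) shows $h(\alpha_{m})=\alpha_{m}$. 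This gives the first assertion: $H_{m}$ equals the $H$-stabilizer of $\alpha_{m}$.

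Next I would identify $H_m$ with a group of conformal automorphisms of a genus-one surface with a fixed point. Glue a disc to $\widehat{R}_{m}$ along $\alpha_{m}$ and, inductively on the tree (the component $\widehat{R}_{m}$ may itself meet further loops of ${\mathcal F}$, but by Lemma \ref{lema1} all the components of $S\setminus{\mathcal F}$ other than $\widehat R_0$ have genus one, so capping off all the boundary loops of $\widehat{R}_{m}$ produces a closed genus-one surface $T_{m}$), obtain an action of $H_{m}$ on $T_{m}$. Since $H_{m}$ fixes $\alpha_{m}$ and cannot interchange its two sides, the disc capping $\alpha_{m}$ is $H_{m}$-invariant, so $H_{m}$ fixes the center of that disc; thus $H_{m}$ is a finite group of conformal automorphisms of a torus with a fixed point. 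By the classical classification of such groups (as recalled in the discussion of rank-one extension groups, or in \cite{Burnside, Maskit:book}), $H_{m}$ is cyclic of order $d\in\{2,3,4,6\}$, or trivial. Alternatively, one may simply invoke Lemma \ref{lema5-1.1}: the $K$-stabilizer of a structure region over $\widehat{R}_{m}$ is an MDC-Schottky extension group of rank $1$, and $H_m$ is its quotient by the corresponding rank-one MDC-Schottky group, i.e.\ a finite group of automorphisms of a torus fixing a point.

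The main obstacle, and the one place where care is needed, is the very first step: ruling out an element of $H$ that fixes $\widehat R_m$ but moves $\alpha_m$ to another boundary loop of $\widehat R_m$ (so that $H_m$ would strictly contain the stabilizer of $\alpha_m$), and symmetrically ruling out an element fixing $\alpha_m$ setwise that swaps its two sides. Both are excluded by the $H$-invariance of $\widehat R_0$: the loop $\alpha_m$ is distinguished among the boundary loops of $\widehat R_m$ as the unique one it shares with $\widehat R_0$, so any automorphism preserving both $\widehat R_m$ and $\widehat R_0$ fixes $\alpha_m$; and an element swapping the two sides of $\alpha_m$ would in particular move $\widehat R_0$ off itself, contradicting invariance. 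Once this is in place the rest is the standard torus-automorphism classification, so I would keep the write-up short, phrasing it as: ``$H_m$ is the $H$-stabilizer of $\alpha_m$ by the $H$-invariance of $\widehat R_0$; capping $\alpha_m$ yields an action of $H_m$ on a torus fixing a point, whence $H_m$ is trivial or cyclic of order $d\in\{2,3,4,6\}$.''
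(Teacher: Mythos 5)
Your proof is correct and follows essentially the same route as the paper: the $H$-invariance of $\widehat{R}_{0}$, together with the genus mismatch across $\alpha_{m}$, identifies $H_{m}$ with the $H$-stabilizer of $\alpha_{m}$, and capping off yields an action of $H_{m}$ on a torus fixing a point, forcing it to be trivial or cyclic of order $d\in\{2,3,4,6\}$. The only imprecision is in the capping step: since $\widehat{R}_{m}$ may have genus $g_{m}\geq 2$, one must (as the paper does) first pass to the genus-one component $T\subset\widehat{R}_{m}$ of $S\setminus{\mathcal F}$ having $\alpha_{m}$ on its border, and cap off the boundary loops of $T$ rather than of $\widehat{R}_{m}$ to obtain the closed genus-one surface.
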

\begin{proof}
As $H$ stabilizes $\widehat{R}_{0}$, it follows that $H_{m}$ also stabilizes $\alpha_{m}$. Now, as no element of $H$ can interchange $\widehat{R}_{m}$ with $\widehat{R}_{0}$ (one has a positive genus and the other is of genus zero), it holds that the $H$-stabilizers of $\widehat{R}_{m}$ and $\alpha_{m}$ are the same.
Now, let $T \subset {\widehat R}_{m}$ be the connected component of $S \setminus {\mathcal F}$ that contains $\alpha_{m}$ in its border. We know that $T$ is a genus one surface. As $H_{m}$ should also stabilize $T$, it extends to act as a group of conformal automorphisms on a closed Riemann surface of genus $1$ fixing one point (corresponding to the loop $\alpha$). It follows that $H_{m}$ is either the trivial group or a finite cyclic group of the required order.
\end{proof}

\begin{lemm}\label{lema3}
Let $T \subset S \setminus {\mathcal F}$ be different from $\widehat{R}_{0}$. Then the $H$-stabilizer of $T$ is a subgroup of some of the cyclic groups $H_{m}$ and the $H$-stabilizer of any of its boundary loops is also a subgroup of such subgroup.
\end{lemm}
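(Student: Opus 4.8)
The plan is to exploit the action of $H$ on the finite tree $\mathfrak{T}$ associated to $\mathcal{F}$ (Remark \ref{Obs6}(1)). Since $\mathcal{F}$ is $H$-invariant (Theorem \ref{loops}), $H$ acts on $\mathfrak{T}$ by tree automorphisms, and since $\widehat{R}_{0}$ is $H$-invariant, $H$ fixes the central vertex $v_{0}$ associated to $\widehat{R}_{0}$; the edges incident to $v_{0}$ are exactly those corresponding to the boundary loops $\alpha_{1},\dots,\alpha_{r}$ of $\widehat{R}_{0}$. Let $v$ be the vertex corresponding to $T$. As $T\neq \widehat{R}_{0}$ we have $v\neq v_{0}$, so $\mathfrak{T}$ contains a unique reduced edge-path $\gamma$ joining $v$ to $v_{0}$; let $\alpha_{m}$ be the edge of $\gamma$ incident to $v_{0}$. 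Recall from Lemma \ref{lema2} that the $H$-stabilizer of the loop $\alpha_{m}$ is exactly the group $H_{m}$.

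The first step is to bound $\mathrm{Stab}_{H}(T)$. If $h\in H$ fixes the vertex $v$, then $h$ fixes both endpoints of $\gamma$; since in a tree the reduced path between two vertices is unique, $h$ maps $\gamma$ onto itself without reversing it (it fixes both ends), hence fixes every vertex and edge of $\gamma$, in particular the edge $\alpha_{m}$. Thus $h$ stabilizes the loop $\alpha_{m}$, so $h\in H_{m}$. This proves $\mathrm{Stab}_{H}(T)\leq H_{m}$.

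The second step handles the boundary loops. Let $\beta$ be any boundary loop of $T$ and $e_{\beta}$ the corresponding edge of $\mathfrak{T}$, with endpoints $v$ and $v'$. Deleting $e_{\beta}$ splits $\mathfrak{T}$ into two subtrees, one containing $v$ and the other $v'$, and $v_{0}$ lies in exactly one of them; hence $v$ and $v'$ are at different distances from $v_{0}$. Since every element of $H$ fixes $v_{0}$ and therefore preserves the distance function to $v_{0}$, an element $h\in \mathrm{Stab}_{H}(\beta)$ stabilizes $e_{\beta}$ but cannot interchange its two endpoints, so it must fix $v$; hence $\mathrm{Stab}_{H}(\beta)\leq \mathrm{Stab}_{H}(T)\leq H_{m}$. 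This is the assertion, with the same index $m$ serving for $T$ and for all of its boundary loops.

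The only delicate point is this last step: a priori an element of $H$ stabilizing the loop $\beta$ could interchange the two structure components of $S\setminus\mathcal{F}$ adjacent to $\beta$. What rules this out is precisely that $\widehat{R}_{0}$ occupies a vertex fixed by all of $H$, which breaks the symmetry between the two sides of $e_{\beta}$; so this is exactly where the hypothesis of the present case (existence of the $H$-invariant genus-zero region) is used.
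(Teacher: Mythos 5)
Your proof is correct and follows essentially the same route as the paper's: both arguments rest on the $H$-invariance of $\widehat{R}_{0}$ together with the fact that the loops of ${\mathcal F}$ are dividing, which you package via the action on the dual tree ${\mathfrak T}$ of Remark \ref{Obs6} while the paper phrases it directly through the inclusion $T \subset \widehat{R}_{m}$ and Lemma \ref{lema2}. Your tree formulation is in fact a cleaner, more explicit justification of the paper's terse final claim that a stabilizer of a boundary loop of $T$ cannot interchange its two sides.
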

\begin{proof}
Let us consider a connected component $T \subset S \setminus {\mathcal F}$, different from $\widehat{R}_{0}$, so
it is a genus one surface with some finite number of boundary loops. The component $T$ must be contained inside some $\widehat{R}_{m}$, for $m=1,\ldots,r$. It follows from Lemma \ref{lema2} that the $H$-stabilizer of $T$ is a subgroup of $H_{m}$. If $\beta \in {\mathcal F}$ is a boundary loop of $T$, then its $H$-stabilizer is a subgroup of the $H$-stabilizer of $T$; otherwise, there will be an element of $H$ that not keeps invariant $\widehat{R}_{0}$.
\end{proof}

Each structure region is either a planar surface homeomorphic, under $P:\Omega \to S$, to the planar surface $\widehat{R}_{0}$ (we say a structure region of type (P)) or, as a consequence of Lemma \ref{lema3}, an infinitely bordered planar surface (called of type (T)) which is invariant under an MDC-Schottky extension group of rank $1$ (either being a MDC-Schottky group of rank $1$ or being a cyclic extension of it).

\begin{lemm}\label{lema4}
If  $\delta \in {\mathcal G}$ is a structure loop and $R_{1}$ and $R_{2}$ the two structure regions having $\delta$ on the border, then the $K$-stabilizer of $\delta$ is the intersection of the $K$-stabilizes of both $R_{j}$. 
\end{lemm}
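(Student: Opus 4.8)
\emph{Proof strategy.} The plan is to prove both inclusions; $K_{R_1}\cap K_{R_2}\subseteq K_\delta$ is formal, and the reverse inclusion is the real content: it amounts to checking that no element of $K_\delta$ interchanges $R_1$ and $R_2$, which is where the $H$-invariance of the central genus-zero surface $\widehat R_0$ (equivalently, the fact that $H$ fixes the corresponding central vertex of the tree ${\mathfrak T}$) enters.

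For the easy inclusion, let $D_1,D_2$ be the two open discs in $\widehat{\mathbb C}$ cut off by the Jordan curve $\delta$, with $\overline{R_1}\subseteq\overline{D_1}$ and $\overline{R_2}\subseteq\overline{D_2}$. Then $\overline{R_1}\cap\overline{R_2}\subseteq\overline{D_1}\cap\overline{D_2}=\delta$, and since $\delta$ lies on the boundary of each $R_j$ we get $\overline{R_1}\cap\overline{R_2}=\delta$. Hence any $k\in K_{R_1}\cap K_{R_2}$ satisfies $k(\delta)=k(\overline{R_1}\cap\overline{R_2})=\overline{R_1}\cap\overline{R_2}=\delta$, so $k\in K_\delta$.

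For the reverse inclusion, let $k\in K_\delta$. As $G\lhd K$, $k$ normalizes $G$, hence descends to $\bar k\in H=K/G$ with $P\circ k=\bar k\circ P$, and $\bar k$ fixes the loop $\alpha:=P(\delta)\in{\mathcal F}$, i.e. it fixes the edge of ${\mathfrak T}$ labelled by $\alpha$. Since ${\mathcal F}$ is $H$-invariant, $H$ acts on ${\mathfrak T}$, and since $\widehat R_0$ is $H$-invariant this action fixes the vertex $\widehat R_0$. Deleting the edge $\alpha$ disconnects ${\mathfrak T}$ into two subtrees, exactly one of which contains the vertex $\widehat R_0$, so the two endpoints of $\alpha$ — the vertices $T_1=P(R_1)$ and $T_2=P(R_2)$, which are the two components of $S\setminus{\mathcal F}$ adjacent to $\alpha$ — lie at consecutive, in particular distinct, combinatorial distances from $\widehat R_0$. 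An automorphism of ${\mathfrak T}$ fixing $\widehat R_0$ preserves the distance from $\widehat R_0$, hence cannot swap $T_1$ and $T_2$; thus $\bar k(T_1)=T_1$ and $\bar k(T_2)=T_2$.

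Finally, $k$ fixes $\delta$, so it permutes the two structure regions adjacent to $\delta$ and $k(R_1)\in\{R_1,R_2\}$; applying $P$ and using $P\circ k=\bar k\circ P$ gives $P(k(R_1))=\bar k(T_1)=T_1$. Since $\alpha$ is a dividing loop we have $T_1\neq T_2=P(R_2)$, and $P$ is injective on each structure region by Lemma \ref{lema5-1}; hence $k(R_1)=R_1$, and similarly $k(R_2)=R_2$. Therefore $k\in K_{R_1}\cap K_{R_2}$, proving $K_\delta\subseteq K_{R_1}\cap K_{R_2}$ and hence equality. The only step that needs genuine care is the tree argument ruling out $\bar k$ swapping the two sides of $\alpha$; transferring that conclusion back up to $\Omega$ is then automatic from the injectivity of $P$ on structure regions.
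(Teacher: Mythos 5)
Your proof is correct and rests on the same key point as the paper's: the $H$-invariance of the central piece $\widehat{R}_{0}$ is what forbids an element stabilizing $\delta$ from interchanging the two adjacent structure regions. The paper delegates this to Lemmas \ref{lema2} and \ref{lema3} (whose proofs invoke exactly that invariance), while you make it explicit via distances from the fixed vertex $\widehat{R}_{0}$ in the tree ${\mathfrak T}$ and settle the easy inclusion by the clean observation $\overline{R_{1}}\cap\overline{R_{2}}=\delta$; this is the same route, written in more detail.
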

\begin{proof}
If one of the structure regions is of type (P), then the other should be of type (T) and the result follows from Lemma \ref{lema2}. If both structure regions are of type (T), then it follows from Lemma \ref{lema3} that 
the $K$-stabilizer of $\delta$ is contained in the intersection of the $K$-stabilizers of $R_{1}$ and $R_{2}$. As each of the loops in ${\mathcal F}$ is a dividing loop in $S$ and $\widehat{R}_{0}$ is $H$-invariant, it follows that  intersection of the $K$-stabilizers of $R_{1}$ and $R_{2}$ stabilizes $\delta$. 
\end{proof}

Let us fix a structure region $R_{0}$ of type (P) and let us denote its boundary structure loops as $\delta_{1}$,..., $\delta_{r}$. Clearly, the $G$-stabilizer of $R_{0}$ is trivial and its $K$-stabilizer is a finite M\"obius group $K^{0}\cong H$. 

Denote by $R_{m}$, for each $m=1,\ldots,r$, the region in $\Omega$ containing $\delta_{m}$ as a border loop and so that $P(R_{m})=\widehat{R}_{m}$. The $G$-stabilizer of $R_{m}$ is an MDC-Schottky group $G^{m}$ of rank $g_{m} \geq 1$ (the genus of $\widehat{R}_{m}$) and, by Lemma \ref{lema2}, its $K$-stabilizer is an MDC-Schotky extension group $K^{m}$ so that $K^{m}/G^{M}=H_{m}$ (which is either trivial or a cyclic group of order $2$ or $3$ or $4$ or $6$).

Let us consider a maximal collection of structure loops in the border of $R_{0}$ which are pairwise non-equivalent under the action of $K^{0}$. Let us assume they are given by (after renumbering) $\delta_{1},\ldots,\delta_{s}$.

It follows, from Lemma \ref{lema4}, that $K$ will be obtained from the Klein-Maskit's combination theorems as an amalgamated free product of the groups $K^{0}$, $K^{1},.\ldots, K^{s}$, that is,
$$K=K^{0} *_{L_{1}}K^{1}*_{L_{2}}K^{2}*_{L_{3}} \cdots *_{L_{s}} K^{s},$$
where $L_{j}=K^{0} \cap K^{j}$. Let us recall that $L_{j}$, the $K$-stabilizer of $\delta_{j}$, is either trivial or a cyclic group of order $d \in \{2,3,4,6\}$. Note that if $\delta_{j}$  has trivial $K$-stabilizer, that is $L_{j}$ is trivial, then necessarilly $K_{j}=G^{j}$. Let us assume, after reordering, that $\delta_{1},\ldots, \delta_{u}$ are exactly the above structure loops with
trivial $K$-stabilizer. It follows that 
$$K=U*K^{0}*_{L_{u+1}}K^{u+1}*_{L_{u+2}}K^{u+2}*_{L_{u+3}} \cdots *_{L_{s}} K^{s},$$
where $U$ is an MDC-Schottky group. It follows that if $u=s$, then $K=U*K_{0}$ and we are done.

Let us now assume $u<s$. Let us denote by $K^{*}$ any one of the group $K^{m}$, for $m \in \{u+1,...,s\}$.
It follows from Lemma \ref{lema3} and Lemma \ref{lema4} that 
$$K^{*}=V_{1}*_{W_{1}}V_{2}*_{W_{2}}* \cdots *_{W_{q}}V_{q},$$
where each $V_{j}$ is a MDC-Schottky extension group of rank $1$ and $W_{j}=V_{j-1} \cap V_{j}$ is either trivial or a cyclic group of order $d \in \{2,3,4,6\}$.

All of the above permits us to see that 
$$K=G*K^{0}*_{\langle t_{2} \rangle} K^{2} *_{\langle t_{3} \rangle} K^{3} *_{\langle t_{4} \rangle} \cdots *_{\langle t_{s} \rangle} K^{s},$$
where $G$ is either trivial or an MDC-Schottky group of some rank and the groups $K^{j}$ ($j\geq 2$) are MDC-Schottky extension groups of rank $1$.

\subsubsection{Second case}
Let us now assume that in the complement of the collection ${\mathcal F}$ there is a $H$-invariant component $\widehat{R}_{0}$ of genus $1$, that is, $H$ is a finite group of automorphisms of a genus one surface. We work similarly as before, but now $K^{0}$ may be any of the MDC-Schottky extension groups of rank $1$.

\subsubsection{Third case}
Let us assume now that in the complement of the collection ${\mathcal F}$ there is no $H$-invariant component $\widehat{R}_{0}$. In this case, there is an invariant loop $\delta \in{\mathcal F}$ and the group $H$ is either the trivial group or a cyclic group of order $2$, $3$, $4$ or $6$, or a dihedral group of order either $4$, $6$, $8$ or $12$ (with one of the involutions permuting both components in $S \setminus \delta$). In this case, we set as $\widehat{R}_{0}$ the genus $2$ surface obtained by the union of these two tori components with $\delta$ as a common boundary loop, which is $H$-invariant. We may proceed as in the first case, but $K^{0}$  is now any of the MDC-Schottky extension group of rank $2$.

\section{Proof of Theorem \ref{cor1}}\label{Sec:prueba2}
Let $M$ be an MDC-handlebody of genus $g \geq 2$ with an MDC-Schottky structure produced by the MDC-Schottky group $G$. Let $H<{\rm Aut}^{+}(M)$ and $K$ be the MDC-Schottky extension group obtained by lifting $H$ to the universal cover space. Let $S$ be the conformal boundary of $M$ with such a structure. Theorem \ref{loops} provides a collection ${\mathcal F}$ of loops which is $H$-invariant and satisfies all the properties in there stated.
Each of the loops in ${\mathcal F}$ is the boundary of a compressing disc in $M$, all of them pairwise disjoint and $H$ invariant (this is exactly what the Equivariant Loop Theorem \cite{M-Y} asserts).

\noindent
(1) For $g \geq 3$, the collection ${\mathcal F}$ has the property that either: 
\begin{itemize}[leftmargin=15pt]
\item[(a)] there is a $H$-invariant component $\widehat{R}_{0} \subset S \setminus {\mathcal F}$ of genus $\gamma \in \{0,1\}$ and all other components are of genus $1$; or 
\item[(b)] $S \setminus {\mathcal F}$ has no $H$-invariant component, each component is of genus $1$ and there is a loop $\delta \in {\mathcal F}$ which is $H$-invariant.
\end{itemize}

 In case (a) the boundary loops of $\widehat{R}_{0}$ is sub-collection of ${\mathcal F}$ which still $H$-invariant.  If $\gamma=0$, then the group $H$ is a finite group isomorphic to either the trivial one or a cyclic group or a dihedral group or ${\mathcal A}_{4}$ or ${\mathcal A}_{5}$ or ${\mathfrak S}_{4}$. If $\gamma=1$, then $H$ is a finite group of conformal automorphisms of a genus one Riemann surface.
 
In case (b) the group $H$ is either trivial or a cyclic group of order $2$, $3$, $4$ or $6$ keeping invariant each of the two components of $S \setminus \delta$ or it is a cyclic group of order two permuting both of these two components or a dihedral group of order either $4$, $6$, $8$ or $12$ (containing one involution that permutes both of the two components and the maximal cyclic group preserving each of them).

\noindent
(2) For $g=2$, the collection ${\mathcal F}$ consists of exactly one loop $\delta$ that divides $S$ into two genera one tori. Moreover, $H$ keeps invariant it. It follows that $H$ is either trivial, cyclic group of order $2$, $3$, $4$ or $6$ that keep invariant each of the two tori or cyclic of oder two that interchanges both tori or a dihedral group of order either $4$, $6$, $8$ or $12$ (containing one involution that permutes both tori and the maximal cyclic group preserving each of them).

\section{Proof of Corollary \ref{maxteo}}\label{Sec:proofmaxteo}
Let us assume $K$ is an MDC-Schottky extension group of rank $g \geq 2$ and $G \lhd K$ is an MDC-Schottky group of rank $g$ and finite index. We already noted that the index is bounded above by $12(g-1)$. Let us assume the index is exactly $12(g-1)$. Let us denote by $S$ the closed Riemann surface uniformized by $G$ and let $H=K/G$ be the induced group of automorphisms of $S$ of order $12(g-1)$. 
The case $g=2$ is a direct consequence of Theorem \ref{cor1}; the only case for which $|H|=12$ is the one provided in Remark \ref{ejemplo1}.
Let us assume $g \geq 3$ and consider a $H$-invariant collection of loops ${\mathcal F}\subset S$ as in Theorem \ref{loops}. 

(1) Assume $S \setminus {\mathcal F}$ has a $H$-invariant surface
of genus $1$. This will provide a genus $1$ Riemann surface admitting a group of conformal automorphisms of order $12(g-1)$ keeping invariant a collection of at most $g-1$ points; which is not possible for $g \geq 2$. If there is a loop in ${\mathcal F}$ which is $H$-invariant, then the highest order for $H$ is $12$; this cannot be $12(g-1)$ if $g \geq 3$.

(2) Let us now assume there is a planar surface $R_{0}\subset S \setminus {\mathcal F}$ which is $H$-invariant. In this case,  
$H$ is isomorphic to either (i) ${\mathbb Z}_{12(g-1)}$ or (ii) $D_{6(g-1)}$ or (iii) ${\mathcal A}_{4}$ or (iv) ${\mathcal A}_{5}$ or (v) ${\mathfrak S}_{4}$. Also, 
there is a sub-collection of loops, say $\alpha_{1},\ldots,\alpha_{r} \in {\mathcal F}$, where $r \leq g$, which is also $H$-invariant and all of them are exactly the borders of $R_{0}$. Moreover, each $\alpha_{j}$ (for $j=1,\ldots,r$) has either trivial or cyclic $H$-stabilizer whose order is either $2,3,4$ or $6$.
If $g \geq 7$, then $H$ may only be either ${\mathbb Z}_{12(g-1)}$ or a $D_{6(g-1)}$. In the cyclic case, $g$ should satisfy the inequality $g \geq r=12(g-1)L$, some $L \geq 0$, a contradiction. In the dihedral case, $g$ should satisfy an inequality either of the previous form (then a contradiction) or one of the form $g \geq r=12(g-1)L_{1} + 6(g-1)L_{2}$ for some $L_{1} \geq 0$ and $L_{2} \in \{0,1,2\}$; again a contradiction.
If $g=6$, then $|H|=60$ and $H$ can be either ${\mathcal A}_{5}$, or $D_{30}$ or ${\mathbb Z}_{60}$. In the cyclic case, $g$ must satisfy the inequality $g \geq r=60L$, for some $L \geq 0$, a contradiction. In the dihedral case, $g$ must satisfy an equality
of the form $g \geq r=60L_{1} + 30L_{2}$ for some $L_{1} \geq 0$ and $L_{2} \in \{0,1,2\}$; again a contradiction. In the ${\mathcal A}_{5}$ there is no possible way to draw a system of $r \leq g$ pairwise disjoint simple loops as required and being invariant under such a group.
If $g=5$, the $|H|=48$ and $H$ can be either  $D_{24}$ or ${\mathbb Z}_{48}$. In the cyclic case, $g$ must satisfy the inequality $g \geq r=48L$, for some $L \geq 0$, a contradiction. In the dihedral case, $g$ must satisfy an inequality
of the form $g \geq r=48L_{1} + 24L_{2}$ for some $L_{1} \geq 0$ and $L_{2} \in \{0,1,2\}$; again a contradiction. 
If $g=4$, the $|H|=36$ and $H$ can be either  $D_{18}$ or ${\mathbb Z}_{36}$. In the cyclic case, $g$ must satisfy the inequality $g \geq r=36L$, for some $L \geq 0$, a contradiction. In the dihedral case, $g$ must satisfy an inequality
of the form $g \geq r=36L_{1} + 18L_{2}$ for some $L_{1} \geq 0$ and $L_{2} \in \{0,1,2\}$; again a contradiction. 
If $g=3$, then $|H|=24$ and $H$ can be either ${\mathfrak S}_{4}$, or $D_{12}$ or ${\mathbb Z}_{24}$. In the cyclic case, $g$ must satisfy the inequality $g \geq r=24L$, for some $L \geq 0$, a contradiction. In the dihedral case, $g$ must satisfy an inequality
of the form $g \geq r=24L_{1} + 12L_{2}$ for some $L_{1} \geq 0$ and $L_{2} \in \{0,1,2\}$; again a contradiction. In the ${\mathfrak S}_{4}$ there is no possible way to draw a system of $r \leq g$ pairwise disjoint simple loops as required and being invariant under such a group.

\section{An example}
Let $K$ be an MDC-Schottky extension group $K$ contains the MDC-Schottky group $G$ of rank $g \geq 3$ as a finite index normal subgroup so that $H=K/G$ acts on $S=\Omega(G)/G$. Let us assume the collection of loops ${\mathcal F}$ (provided by Theorem \ref{loops}) defines a genus zero surface $\widehat{R}_{0}$ which is invariant under $H$. Also, we assume the collection of surfaces $\widehat{R}_{j}$ (as in the proof above) are of genus 
$g_{j}=1$ for all $j=1,\ldots,r$; in which case $r=g$. In this case, each group $K^{m}$  is conjugated  (in the M\"obius group) to either one of the $K_{1}$, $K_{2}$, $K_{22}$, $K_{3}$, $K_{4}$ or $K_{6}$. 
In the case the $K^{m}$ is conjugate to $K_{d}$, for $d \in \{2,3,4,6\}$, we set $d_{m}=d$ (if $K_{22}$, then set $d_{m}=2$). It follows, from the proof (first case) that
$$K \cong \underbrace{{\mathbb Z}^{2}*\cdots *{\mathbb Z}^{2}}_{u} *K_{0}*_{{\mathbb Z}_{d_{1}}} K_{d_{1}}*_{{\mathbb Z}_{d_{2}}} K_{d_{2}}*_{{\mathbb Z}_{d_{3}}} \cdots *_{{\mathbb Z}_{d_{s-u}}} K_{d_{s-u}}, \; {\rm where} \; K^{0} \cong H,$$
and $$g=|K_{0}| \left(u+\sum_{j=1}^{s-u} \frac{1}{d_{j}} \right).$$

Next, we proceed to see the signatures of the orbifolds uniformized by $K$.

\subsection{}
If $K_{0}$ is trivial, then $K$ is an MDC-Schottky group of rank $g$ and it uniformizes a genus $g$ Riemann surface.

\subsection{}
Assume $K_{0} \cong {\mathbb Z}_{n}$. 
If $n \in \{2,3,4,6\}$, then we may first amalgamate $K_{0}$ with $a \in \{0,1,2\}$ groups isomorphic to $K_{n}$ (otherwise $a=0$). Next, we may do the free product of it with an MDC-Schottky group of rank $r$. In this case, $g=nr+a$ and $K$ uniformizes an orbifold with signature
$$
\begin{array}{|c|c||c|c|}\hline 
(r,2;n,n) & \mbox{if $a=0$} &
(r,4;2,2,2,2) & \mbox{$n=2$ and $a=1$} \\\hline 
(r,6;2,2,2,2,2,2) & \mbox{$n=2$ and $a=2$} &
(r,3;3,3,3) & \mbox{$n=3$ and $a=1$} \\\hline 
(r,4;3,3,3,3) & \mbox{$n=3$ and $a=2$} & 
(r,4;2,2,4,4) & \mbox{$n=4$ and $a=1$} \\\hline 
(r,6;2,2,2,2,4,4) & \mbox{$n=4$ and $a=2$} & 
(r,3;2,3,6) & \mbox{$n=6$ and $a=1$} \\\hline 
(r,4;2,2,3,3) & \mbox{$n=6$ and $a=2$} \\\hline 
\end{array}
$$


\subsection{}
Assume $K_{0}\cong D_{n}$.
We may first amalgamate $K_{0}$ with $b \in \{0,1,2\}$ groups isomorphic to $K_{2}$ (if $n$ odd, necessarily $b=2$). For $n \in \{2,3,4,6\}$ we may also amalgamate to it $a \in \{0,1\}$ groups isomorphic to $K_{n}$. Then, we may do the free product of it with an MDC-Schottky group of rank $r$. In this case, $g=2nr+2a+bn$ and $K$ uniformizes an orbifold with signature
$$
\begin{array}{|c|c|}\hline 
(r,3;2,2,n) & \mbox{$a+b=0$} \\\hline
(r,5;2,2,2,2,n) & \mbox{$a+b=1$, $n$ even and $a=0$ if $n \notin \{2,4\}$} \\\hline
(r,7;2,2,2,2,2,2,n) & \mbox{$a+b=2$ and $a=0$ if $n \notin \{2,4\}$} \\\hline
(r,9;2,2,2,2,2,2,2,2,2) & \mbox{$n=2, a+b=3$} \\\hline
(r,4;2,2,3,3) & \mbox{$n=3, a=1, b=0$} \\\hline
(r,8;2,2,2,2,2,2,3,3) & \mbox{$n=3, a=1, b=2$} \\\hline 
(r,9;2,2,2,2,2,2,2,2,4) & \mbox{$n=4, a=1, b=2$} \\\hline
(r,4;2,2,2,3) & \mbox{$n=6, a=1, b=0$} \\\hline
(r,6;2,2,2,2,2,3) & \mbox{$n=6, a=b=1$} \\\hline
(r,8;2,2,2,2,2,2,2,3) & \mbox{$n=6, a=1, b=2$} \\\hline
\end{array}
$$


\subsection{}
Assume $K_{0} \cong {\mathcal A}_{4}$.
We may first amalgamate $K_{0}$ with $a\in \{0,1,2\}$ groups isomorphic to $K_{3}$ and $b\in \{0,1\}$ groups isomorphic to $K_{2}$. Then, we may do the free product of it with an MDC-Schottky group of rank $r$. Then,  $g=12r+4a+6b$, where $a\in \{0,1,2\}$ and $b \in \{0,1\}$ and $K$ uniformizes an orbifold with signature
$$
\begin{array}{|c|c||c|c|}\hline
(r,3;2,3,3) & \mbox{$a=b=0$} &
(r,4;2,3,3,3) & \mbox{$a=1$, $b=0$} \\\hline
(r,5;2,3,3,3,3) & \mbox{$a=2$,  $b=0$} &
(r,5;2,2,2,3,3) & \mbox{$a=0, b=1$} \\\hline
(r,6;2,2,2,3,3,3) & \mbox{$a=1, b=1$} &
(r,7;2,2,2,3,3,3,3) & \mbox{$a=2, b=1$} \\\hline
\end{array}
$$


\subsection{}
Assume $K_{0} \cong {\mathcal A}_{5}$.
We may amalgamate $K_{0}$ with $a \in \{0,1\}$ groups isomorphic to $K_{3}$ and $b \in \{0,1\}$ groups isomorphic to $K_{2}$. Then, we may do the free product of it with an MDC-Schottky group of rank $r$. Then, $g=60r+20a+30b$, where $a,b\in \{0,1\}$ and $K$ uniformizes an orbifold with signature
$$
\begin{array}{|c|c||c|c|}\hline
(r,3;2,3,5) & \mbox{$a=b=0$} & 
(r,5;2,2,2,3,5) & \mbox{$a=0$, $b=1$} \\\hline
(r,4;2,3,3,5) & \mbox{$a=1$,  $b=0$} &
(r,6;2,2,2,3,3,5) & \mbox{$a=1, b=1$} \\\hline
\end{array}
$$


\subsection{}
Assume $K_{0}\cong {\mathfrak S}_{4}$.
We may amalgamate $K_{0}$ with 
$a\in \{0,1\}$  groups isomorphic to $K_{4}$ and $b \in \{0,1\}$
groups isomorphic to $K_{2}$ and $c \in \{0,1\}$ groups isomorphic to $K_{3}$r. Then, we may do the free product of it with an MDC-Schottky group of rank $r$. In this case $g=24r+6a+12b+8c$, where $a,b,c\in \{0,1\}$, and $K$ uniformizes an orbifold with signature
$$
\begin{array}{|c|c||c|c|}\hline
(r,3;2,2,4) & \mbox{$a=b=c=0$} & 
(r,4;2,3,3,4) & \mbox{$a=b=0$, $c=1$} \\\hline
(r,5;2,2,2,3.4) & \mbox{$a=c=0$, $b=1$} &
(r,6;2,2,2,3,3,4) & \mbox{$a=0, b=c=1$} \\\hline
(r,5;2,2,2,3,4) & \mbox{$a=1, b=c=0$} &
(r,7;2,2,2,2,2,3,4) & \mbox{$a=b=1, c=0$} \\\hline
(r,6;2,2,2,3,3,4) & \mbox{$a=c=1, b=0$} &
(r,8;2,2,2,2,2,3,3,4) & \mbox{$a=b=c1$} \\\hline
\end{array}
$$



\end{document}